\documentclass[12pt]{amsart}
\usepackage[hmargin=2.5cm,vmargin=2.5cm]{geometry}
\usepackage{amsmath}
\usepackage{amsfonts}
\usepackage{amssymb}
\usepackage[noadjust]{cite}
\usepackage{enumitem}
\usepackage{setspace}
\usepackage{amsthm}
\usepackage{graphicx}
\usepackage{float}
\usepackage{subcaption}
\usepackage{xcolor}
\usepackage{tikz}

\AtBeginDocument{
   \def\MR#1{}
}

\newcommand{\tikzmark}[2]{\tikz[overlay,remember picture,baseline] \node [anchor=base] (#1) {$#2$};}

\newcommand{\DrawVLine}[3][]{%
  \begin{tikzpicture}[overlay,remember picture]
    \draw[#1] (#2.north) -- (#3.south);
  \end{tikzpicture}
}
\newcommand{\DrawHLine}[3][]{%
  \begin{tikzpicture}[overlay,remember picture]
    \draw[#1] (#2.west) -- (#3.east);
  \end{tikzpicture}
}

\pagecolor{white}

\allowdisplaybreaks

\newcommand{\Z}{\mathbb{Z}}

\newcommand{\R}{\mathbb{R}}

\newcommand{\kb}[1]{\ensuremath{\langle #1 \rangle}}
\newcommand{\asplice}{\ensuremath{\makebox[0.3cm][c]{\raisebox{-0.3ex}{\rotatebox{90}{$\asymp$}}}}}

\newtheorem{thm}{Theorem}
\numberwithin{thm}{section}

\newtheorem{prop}[thm]{Proposition}
\newtheorem{lemma}[thm]{Lemma}
\newtheorem{cor}[thm]{Corollary}
\newtheorem{question}[thm]{Question}

\newtheorem*{namedtheorem}{\theoremname}
\newcommand{\theoremname}{testing}
\newenvironment{named_thm}[1]{\renewcommand{\theoremname}{#1}\begin{namedtheorem}}{\end{namedtheorem}}
\newcommand{\refthm}[1]{Theorem~\ref{thm:#1}}

\theoremstyle{definition}
\newtheorem{defn}[thm]{Definition}
\newtheorem*{nameddef}{\defname}
\newcommand{\defname}{testing}
\newenvironment{named_def}[1]{\renewcommand{\defname}{#1}\begin{nameddef}}{\end{nameddef}}
\newcommand{\refdef}[1]{Definition~\ref{def:#1}}

\theoremstyle{remark}
\newtheorem{rmk}[thm]{Remark}

\begin{document}
\title{The Jones Polynomial from a Goeritz Matrix}
\author{Joe Boninger}
\address{Department of Mathematics, Boston College, Chestnut Hill, MA}
\maketitle

\begin{abstract}
We give an explicit algorithm for calculating the Kauffman bracket of a link diagram from a Goeritz matrix for that link. Further, we show how the Jones polynomial can be recovered from a Goeritz matrix when the corresponding checkerboard surface is orientable, or when more information is known about its Gordon-Litherland form.  In the process we develop a theory of Goeritz matrices for cographic matroids, which extends the bracket polynomial to any symmetric integer matrix. We place this work in the context of links in thickened surfaces.
\end{abstract}

\section{Introduction}
The Jones polynomial of a link can be computed using a link diagram, a closed braid representative, or other, essentially diagrammatic data. It remains an open problem, posed by Atiyah in 1988 \cite{a88}, to give a three-dimensional definition of the polynomial; since there exist links with homeomorphic complements and distinct Jones polynomials, it is not known what topological information determines the Jones polynomial of a link.

In this paper, we give an explicit algorithm for computing the Kauffman bracket of a non-split link from a Goeritz matrix for that link. Goeritz matrices are combinatorial constructions, defined using a checkerboard surface of a link diagram, that have been actively studied  for almost ninety years \cite{g33}. In a recent paper, Traldi \cite{t20}, extending \cite{l90, g13}, proved the link equivalence relation generated by sharing a Goeritz matrix is the same as the equivalence relation generated by Conway mutations; in particular, two links share a Goeritz matrix only if they are related by a sequence of mutations. Thus, our work may be viewed as a partial realization of the Jones polynomial as an invariant of link mutation classes. We define a function
$$
\mu : \{\text{symmetric, integer matrices}\} \to \Z[A^{\pm1}],
$$
and we prove $\mu$ coincides with the Kauffman bracket on matrices which are Goeritz matrices of link diagrams. Further, we show how to recover the full Jones polynomial when the checkerboard surface used to construct the matrix is orientable, or when more information is known about its Gordon-Litherland form.

We prove:

\begin{named_thm}{\protect\refthm{orientable_jones}}
Let $K$ be a knot with checkerboard surface $S$ and associated Goeritz matrix $G = (g_{ij})$. If $S$ is orientable (equivalently, if every diagonal entry of $G$ is even), then the Jones polynomial $J_K(t)$ of $K$ is given by
$$
J_K(t) = \big[ (-A)^{3(\sum_{i \leq j} g_{ij})} \mu[G] \big]_{t^{1/2} = A^{-2}}.
$$
\end{named_thm}

More generally:

\begin{named_thm}{\protect\refthm{full_jones}}
Let $L$ be an oriented link with checkerboard surface $S$ and associated Goeritz matrix $G = (g_{ij})$. Then
$$
J_L(t) = \big[ (-A)^{-3(e(S,L) - \sum_{i \leq j} g_{ij})} \mu[G] \big]_{t^{1/2} = A^{-2}}.
$$
\end{named_thm}
The quantity $e(S,L)$, the signed Euler number of $S$ and $L$, is defined in Section 5 below.

Goeritz matrices have topological significance. A Goeritz matrix of a checkerboard surface represents the Gordon-Litherland form of the surface, and is therefore also a presentation matrix for the first homology group of the branched double cover of the corresponding link \cite[Ch.~9]{l97}. In this context, Theorem \ref{thm:full_jones} says the Jones polynomial of a link can be computed from the Gordon-Litherland form of a sufficiently nice spanning surface, provided we choose the right basis. Our polynomial $\mu$ is not an invariant of quadratic forms---this is impossible, since there exist links which admit isomorphic Gordon-Litherland forms and have distinct Jones polynomials. Nonetheless, we are hopeful our work will lead to a greater topological understanding of the Jones polynomial.

In addition to relating the polynomial $\mu$ to the Jones polynomial and the Kauffman bracket, we investigate its meaning for those symmetric, integer matrices which are not the Goeritz matrix of any classical link diagram. This leads us to consider the construction of a Goeritz matrix from a Tait graph, and we generalize this construction from signed, planar graphs to all colored, cographic matroids---see Sections 2 and 3 for background information and details. After defining Goeritz matrices for cographic matroids, we use them to give an interpretation of the polynomial $\mu$ for any symmetric, integer matrix. We then place the theory we've developed in the context of links in thickened surfaces, with two immediate applications. First, we prove: 

\begin{named_thm}{\protect\refthm{every_matrix}}
Every symmetric, integer matrix is the Goeritz matrix of a checkerboard-colorable link in a thickened surface.
\end{named_thm}

Goeritz matrices for links in thickened surfaces were defined in \cite{ill10}, and studied further in \cite{bck21}. Theorem \ref{thm:every_matrix} extends \cite[Thm.~3.5]{bck21}, which considers the case of knots.

As a second application, for any checkerboard-colorable, non-split, oriented link $L$ in a thickened closed, orientable surface $\Sigma \times I$, we define a set $V$ of two polynomials $\{\nu, \nu'\}$ in one variable $t$---see Definition \ref{def:nu} below. We show the set $V$ is an isotopy invariant of $L$, and that both $\nu$ and $\nu'$ coincide with the Jones polynomial of $L$ when $\Sigma = S^2$. Finally, the work of \cite{ill10, bck21} shows every checkerboard-colorable link in a thickened surface has two determinants, $\det(L) = \{d, d'\}$. We prove:

\begin{named_thm}{\refthm{dets}}
Let $L$ be a checkerboard-colorable, non-split, oriented link in a thickened closed, orientable surface. Then
$$
\det(L) = \{|\nu(-1)|, |\nu'(-1)|\}.
$$
\end{named_thm}

This surprising result precisely generalizes the classical case, where the determinant of a link is equal to the absolute value of its Jones polynomial evaluated at $-1$. Our proof of Theorem \ref{thm:dets} gives a direct proof of the classical case as well, without making use of the Alexander or Kauffman polynomials.

Given a checkerboard-colorable link diagram $D$ in a surface $\Sigma$, the polynomials $V = \{\nu, \nu'\}$ are defined using the two Tait graphs of $D$. Thistlethwaite \cite{t87} defined a Tutte-type polynomial invariant of signed graphs, which we call $\tau$, and we apply $\tau$ to each Tait graph of $D$ to produce the polynomials $\nu$ and $\nu'$. We also reformulate $\tau$ as an invariant of signed matroids in order to relate it to our polynomial $\mu$. Thistlethwaite concluded \cite{t87} by writing, ``I do not know whether this polynomial [$\tau$] has any application in the case that [a graph] is not planar.'' We are pleased to discover such a use for $\tau$.

\subsection{A Polynomial for Symmetric Matrices}

We proceed with an overview of Goeritz matrices and the polynomial $\mu$. Calculation of $\mu$ is easy to automate, and $\mu$ may have practical applications for computing Jones polynomials.

Let $D \subset \R^2$ be a diagram of a non-split link $L \subset S^3$. To define a Goeritz matrix for $D$, we shade one of the checkerboard surfaces $S$ bound by $D$ in the plane. We then label the unshaded regions comprising $\R^2 - S$ by $X_0, X_1, \dots, X_m$. This gives a sign $\sigma(c)$ to each crossing $c \in D$, as shown in Figure \ref{fig:cc}. The \emph{unreduced Goeritz matrix} $\tilde{G} = (\tilde{g}_{ij})$ corresponding to $D$ and $S$ is the $(m + 1)$-by-$(m + 1)$ symmetric matrix defined by
\begin{equation}
\label{eq:goeritz}
\tilde{g}_{ij} = \begin{cases}
-\sum_{\text{$c$ adjacent to $X_i$ and $X_j$}} \sigma(c) & i \neq j \\
-\sum_{k \neq i} g_{ik} & i = j
\end{cases}.
\end{equation}
A \emph{Goeritz matrix} $G = (g_{ij})$ is obtained from $\tilde{G}$ by deleting its first row and column. Thus, $G$ is an $m$-by-$m$ symmetric matrix whose construction depends on a few choices.

\begin{figure}[H]
\centering
\subcaptionbox{Checkerboard sign \\ convention \label{fig:cc}}{
\includegraphics[height=2cm]{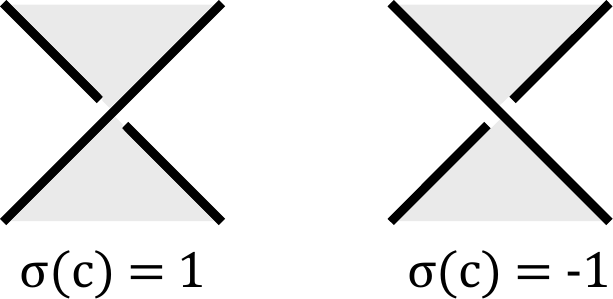}
}
\hspace{1cm}
\subcaptionbox{Writhe sign \\ convention \label{fig:crossing_sign}}{
\includegraphics[height=2cm]{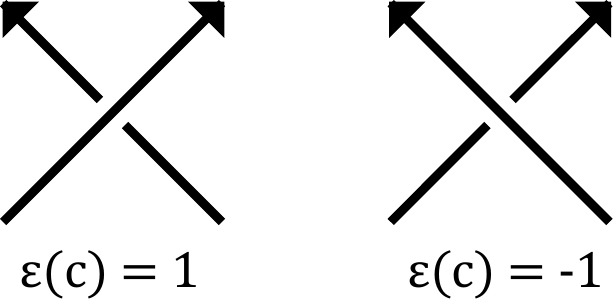}
}
\hspace{1cm}
\subcaptionbox{Crossing types \label{fig:ct}}{
\includegraphics[height=2cm]{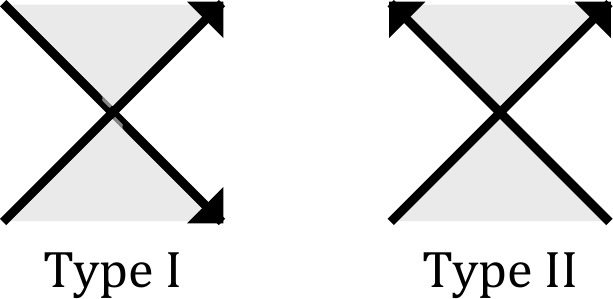}
}
\caption{}
\label{fig:conventions}
\end{figure}

Not every crossing of $D$ is detected by $G$---only those adjacent to two distinct regions of $\R^2 - D$. We call undetected crossings $S$-nugatory.

\begin{defn}
Let $D$ be a link diagram with checkerboard surface $S$. A crossing $c \in D$ is \emph{$S$-nugatory} if there exists a simple closed curve in $\R^2$ which intersects $S$ only at $c$. For any orientation of $D$, we denote the writhe of all $S$-nugatory crossings (following Figure \ref{fig:crossing_sign}) by $w_0(D,S)$.
\end{defn}

Every $S$-nugatory crossing is nugatory, and since a nugatory crossing has the same sign $\varepsilon$ under any orientation, $w_0(S,D)$ does not depend on how we orient $D$.

Next, we define three matrices $G'_{ij}$, $G''_{ij}$, and $G'_i$, which may be obtained from $G$ as follows:

\begin{defn}
\label{def:matrix_1}
Let $g_{ij}$, $i \neq j$, be a fixed entry of $G$.
\begin{enumerate}[label=(\alph*)]
\item Define $G'_{ij} = (g'_{k\ell})$ to be the symmetric matrix with:
\begin{align*}
&g'_{ii} = g_{ii} + g_{ij}, \\ 
&g'_{jj} = g_{jj} + g_{ij}, \\
&g'_{ij}, g'_{ji} = 0,
\end{align*}
and $g'_{k\ell} = g_{k\ell}$ otherwise.
The changes between $G$ and $G'$ are shown below.
$$
\begin{bmatrix}
\ddots & \\
& g_{ii} & \dots & g_{ij} \\
& \vdots & \ddots & \vdots \\
& g_{ji} & \dots & g_{jj} \\
& & & & \ddots \\
\end{bmatrix} \mapsto
\begin{bmatrix}
\ddots & \\
& g_{ii} + g_{ij} & \dots & 0 \\
& \vdots & \ddots & \vdots \\
& 0 & \dots & g_{jj} + g_{ij} \\
& & & & \ddots \\
\end{bmatrix}
$$
\item Define $G''_{ij} = (g''_{k\ell})$ to be the symmetric matrix obtained from $G$ by deleting its $j$th row and column, and assigning the following values:
\begin{align*}
&g''_{ii} = g_{ii} + g_{jj} + 2g_{ij}, \\
&g''_{ik} = g_{ik} + g_{jk} \text{ for all $k \neq i$}, \\
&g''_{ki} = g_{ki} + g_{kj}\text{ for all $k \neq i$}.
\end{align*}
This is shown below, where red lines indicate the removed row and column.
$$
\begin{bmatrix}
\ddots & & & & \vdots \\
& g_{ii} & g_{ik} & \dots & g_{ij} \\
& g_{ki} & \ddots & & g_{kj} \\
& \vdots & & \ddots & \vdots \\
\dots & g_{ji} & g_{jk} & \dots & g_{jj} & \dots \\
& & & & \vdots &  \ddots \\
\end{bmatrix} \mapsto
\begin{bmatrix}
\ddots & & & & \tikzmark{topA}{\vdots} \\
& g_{ii} + g_{jj} + 2g_{ij} & g_{ik} + g_{jk} & \dots & g_{ij} \\
& g_{ki} + g_{kj} & \ddots & & g_{kj} \\
& \vdots & & \ddots & \vdots \\
\tikzmark{topB}{\dots} & g_{ji} & g_{jk} & \dots & g_{jj} & \tikzmark{bottomB}{\dots} \\
& & & & \tikzmark{bottomA}{\vdots} &  \ddots \\
\end{bmatrix}
$$
\DrawVLine[red, thick, opacity=0.8]{topA}{bottomA}
\DrawHLine[red, thick, opacity=0.8]{topB}{bottomB}
\item For a fixed diagonal entry $g_{ii}$ of $G$, let $G'_i$ be the symmetric matrix obtained from $G$ by deleting its $i$th row and column.
\end{enumerate}
\end{defn}

We also require the following polynomials, which arise naturally when applying the Kauffman bracket to twist regions of a link diagram.
\begin{defn}
\label{def:polys}
For an indeterminate $A$, define a set of Laurent polynomials $P_n \in \Z[A^{\pm 1}]$, $n \in \Z$, by
$$
P_n(A) = \sum_{j = 1}^{|n|} (-1)^{j - 1} A^{\text{sgn}(n)(|n| - 4j + 2)}
$$
if $n \neq 0$. Let $P_0 \equiv 0$.
\end{defn}

We can now define our polynomial $\mu$ and state the relevant theorems.
\begin{defn}
\label{def:new_bracket}
Given a symmetric, integer matrix $G = (g_{k\ell})$, define a polynomial $\mu[G] \in \Z[A^{\pm 1}]$ recursively as follows:
\begin{enumerate}[label=(\roman*)]
\item If $G$ is the empty matrix, $\mu[G] = 1$.
\item For any $i \neq j$,
\begin{align*}
\mu[G] = A^{-g_{ij}} \mu[G_{ij}'] + P_{-g_{ij}}(A) \mu[G''_{ij}].
\end{align*}
\item Let $g_{ii}$ be any diagonal entry of $G$ such that $g_{i\ell} = 0$ (and $g_{\ell i} = 0$) for all $\ell \neq i$. Then
$$
\mu[G] = (A^{g_{ii}}(-A^{-2} - A^2) + P_{g_{ii}}(A)) \mu[G_i'].
$$
\end{enumerate}
\end{defn}

\begin{named_thm}{\protect\refthm{well-defined}}
The polynomial $\mu$ is well-defined for any symmetric integer matrix.
\end{named_thm}

\begin{named_thm}{\protect\refthm{bracket}}
Let $L \subset S^3$ be a link with non-split diagram $D$ and checkerboard surface $S$, and let $G$ be a Goeritz matrix associated to $S$. Then
$$
\langle D \rangle = (-A)^{-3w_0(D,S)}\mu[G],
$$
where $\langle D \rangle$ is the Kauffman bracket of $D$.
\end{named_thm}

Assuming the existence of $\mu$, its uniqueness is clear. Let $G$ be any symmetric, integer matrix, and consider applying relation (ii) to an off-diagonal element $g_{ij}$: in the matrix $G'_{ij}$ the corresponding element $g'_{ij}$ is zero, and in the matrix $G''_{ij}$ this element has been removed altogether. Thus, by repeatedly using relation (ii), we can express $\mu(G)$ as a $\Z[A^{\pm 1}]$-linear combination of polynomials $\{\mu(H_k)\}_k$, where each $H_k$ is a diagonal matrix. We can then use relation (iii) to reduce to the case where each $H_k$ is the empty matrix, for which $\mu$ is defined to be $1$.

As an example, consider the matrix
$$
G = \begin{bmatrix}
2 & -1 \\
-1 & 2
\end{bmatrix},
$$
which is a Goeritz matrix for the checkerboard surface of the trefoil shown in Figure \ref{fig:trefoil}. With the method described above, we compute:
\begin{align*}
\mu[G] &= A\mu \begin{bmatrix} 1 & 0 \\ 0 & 1 \end{bmatrix} + A^{-1} \mu[2] \\
&= A\big(A(-A^{-2} - A^2) + A^{-1} \big) \mu[1] + A^{-1} \big(A^2(-A^{-2} - A^2) + (1 - A^{-4}) \big) \\
&= A\big(A(-A^{-2} - A^2) + A^{-1} \big)\big(A(-A^{-2} - A^2) + A^{-1} \big) + A^{-1} \big(A^2(-A^{-2} - A^2) + (1 - A^{-4}) \big) \\
&= A^7 - A^3 - A^{-5}.
\end{align*}
As Theorem \ref{thm:bracket} claims, the polynomial $\mu(G)$ is equal to the Kauffman bracket of the diagram.

\begin{figure}[H]
\centering
\subcaptionbox{A checkerboard surface \\ of the trefoil \label{fig:trefoil}}{
\includegraphics[height=3.5cm]{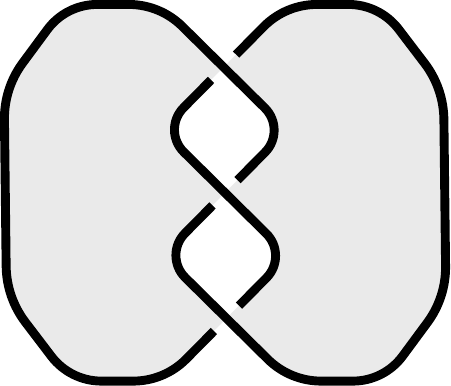}
}
\hspace{1cm}
\subcaptionbox{The Tait graph of \protect{\ref{fig:trefoil}}\label{fig:tait}}{
\includegraphics[height=3.5cm]{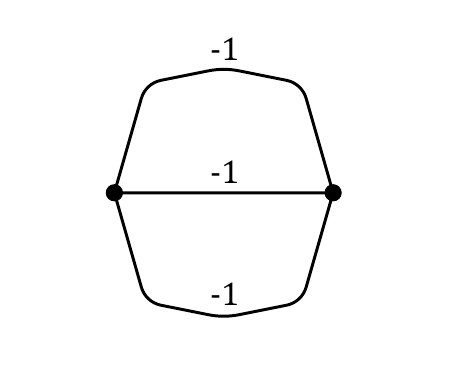}
}
\caption{}
\label{fig:conventions}
\end{figure}

To prove Theorem \ref{thm:well-defined}, and to understand the meaning of $\mu$ for matrices which are not the Goeritz matrices of classical links, we define Goeritz matrices for any signed, cographic matroid in Definitions \ref{def:scb} and \ref{def:goeritz} below. We then show, in Corollary \ref{thm:all_signed}, that every symmetric, integer matrix is a Goeritz matrix of a signed, cographic matroid. We prove the following generalization of Theorem \ref{thm:bracket}:

\begin{named_thm}{\protect\refthm{main_one}}
Let $M = (E, \mathcal{C}, \sigma)$ be a signed, cographic matroid with Goeritz matrix $G$. Let $\iota_+$ (resp. $\iota_-$) be the number of coloops $e$ of $M$ such that $\sigma(e) = 1$ (resp. $\sigma(e) = -1$). Then
$$
\tau[M] = (-A)^{3(\iota_- - \iota_-)}\mu[G].
$$
\end{named_thm}

As above, $\tau$ is Thistlethwaite's invariant of \cite{t87}. Theorem \ref{thm:main_one} implies Theorem \ref{thm:bracket} because, as \cite{t87} shows, if $\Gamma$ is the Tait graph of a link diagram $D \subset \R^2$, then $\tau[\Gamma] = \langle D \rangle$. Theorem \ref{thm:well-defined} follows from Theorem \ref{thm:main_one} and Corollary \ref{thm:all_signed}.

The polynomial $\mu$ raises many interesting questions beyond the scope of this paper. For example, is there a direct proof, without appealing to matroids, that $\mu$ is well-defined? Further:

\begin{question}
What is a closed formula for $\mu[G]$ in terms of the entries of the symmetric matrix $G$?
\end{question}

Even for four-by-four matrices, this question appears hard to answer without computer assistance.

\subsection{Outline}

In Section 2, we review relevant background information. In Section 3 we define Goeritz matrices for cographic matroids, and in Section 4 we use this theory to prove Theorems \ref{thm:main_one}, \ref{thm:well-defined}, and \ref{thm:bracket}. Section 5 shows how the Jones polynomial may be computed from a Goeritz matrix and the Gordon-Litherland form of a checkerboard surface, and we prove Theorems \ref{thm:orientable_jones} and \ref{thm:full_jones}. Finally, Section 6 applies the results of Sections 3 and 4 to links in thickened surfaces. We define the polynomials $V = \{\nu, \nu'\}$ in Definition \ref{def:nu}, and we prove Theorems \ref{thm:every_matrix} and \ref{thm:dets}.

\subsection{Acknowledgements}

We thank Ilya Kofman for his mentorship in this project, and Gabriel Black for pointing out an error in the proof of Theorem \ref{thm:nu}.

\section{Preliminaries}

\subsection{The Kauffman Bracket and the Jones Polynomial}

The Kauffman bracket \cite{k87} of a link diagram $D \subset \R^2$ is a Laurent polynomial $\langle D \rangle \in \Z[A^{\pm1}]$ defined recursively by:
\begin{enumerate}[label=(\roman*)]
\item $\kb{\bigcirc} = 1 $
\item $\kb{\bigcirc \sqcup D} = (-A^{-2}-A^2) \; \kb{D}$
\item $\kb{\includegraphics[height=0.3cm]{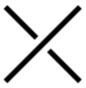}} = A \; \kb{\asplice} + A^{-1}  \kb{\asymp} $
\end{enumerate}
The symbol $\bigcirc$ indicates a simple closed curve, while the arcs in relation (iii) represent three diagrams which are identical outside of a disk, where they look at shown.

The Jones polynomial, an invariant of oriented links, may be obtained from the Kauffman bracket by a normalization. For any oriented link $L$ with diagram $D \subset \R^2$, with writhe $w(D)$, the Jones polynomial $J_L(t)$ is defined by:
\begin{equation}
\label{eq:jones}
J_L(t) = \big[ (-A)^{-3w(D)} \langle D \rangle \big]_{t^{1/2} = A^{-2}}.
\end{equation}

\subsection{Links in Thickened Surfaces}

Let $\Sigma$ be a closed, orientable surface. As in the classical case, a link in a thickened surface is a smooth embedding $\sqcup S^1 \hookrightarrow \Sigma \times I$ of finitely many disjoint copies of $S^1$, considered up to isotopy. In this context, a link diagram $D \subset \Sigma$ is the image of a regular projection
$$
\sqcup S^1 \hookrightarrow \Sigma \times I \to \Sigma \times \{0\} \cong \Sigma,
$$
with over/under crossing information added on $\Sigma$, just as for diagrams in $\R^2$. A link diagram $D \subset \Sigma$ is said to be \emph{checkerboard-colorable} if the connected components of $\Sigma - D$ can each be colored black or white, so that no two regions which abut the same strand of $D$ have the same color. In this case the shaded regions of $\Sigma - D$ form a \emph{checkerboard surface} for $D$.

Im, Lee, and Lee \cite{ill10} extended Goeritz matrices to checkerboard-colorable links in thickened surfaces, as follows:

\begin{defn}[\protect \cite{ill10}]
\label{def:ill}
Let $D \subset \Sigma$ be a checkerboard-colorable link diagram with checkerboard surface $S$, and let $X_0, \dots, X_m$ enumerate the regions of $\Sigma - S$. The \emph{unreduced Goeritz matrix} $\tilde{G} = (\tilde{g}_{ij})$ corresponding to $D$ and $S$ is the $(m + 1)$-by-$(m + 1)$ symmetric matrix defined by
$$
\tilde{g}_{ij} = \begin{cases}
-\sum_{\text{$c$ adjacent to $X_i$ and $X_j$}} \sigma(c) & i \neq j \\
-\sum_{k \neq i} g_{ik} & i = j
\end{cases},
$$
where the sign $\sigma(c)$ of a crossing $c$ is as in Figure \ref{fig:cc}. As before, a \emph{Goeritz matrix} $G = (g_{ij})$ is obtained from $\tilde{G}$ by deleting its first row and column.
\end{defn}

Suppose $D \subset \Sigma$ is a checkerboard-colorable link diagram of a link $L \subset \Sigma \times I$. Let $S$ and $S'$ be the two checkerboard surfaces of $D$, with $G$ and $G'$ Goeritz matrices for $S$ and $S'$ respectively. Im, Lee, and Lee showed combinatorially that the set $\{|\det(G)|, |\det(G')|\}$ is an invariant of $L$, called the \emph{determinant} of $L$. (In the classical case $\Sigma = S^2$, $|\det(G)| = |\det(G')|$.) Their work was generalized and given topological meaning in a recent paper by Boden, Chrisman, and Karimi \cite{bck21}.

\subsection{Tait Graphs and Matroids}

Let $D \subset \Sigma$ be a checkerboard-colorable link diagram with checkerboard surface $S$, where $\Sigma$ is either a closed surface or $\R^2$. The \emph{Tait graph} of $D$ and $S$ is the graph $\Gamma \subset \Sigma$ which assigns a vertex to each region of $S$ in $\Sigma$, and an edge to each crossing $c \in D$ that joins the vertices of its two adjacent regions. The signs $\sigma$ of crossings induce signs on the edges of the Tait graph---see Figure \ref{fig:tait} for the Tait graph of the surface in Figure \ref{fig:trefoil}.

If $D \subset \R^2$ and $D$ is non-split, Goeritz matrices of $D$ may be defined using $\Gamma$ as follows. Let $X_0, X_1, \dots, X_m$ enumerate the regions of $\R^2 - \Gamma$, and let $C_i = \partial X_i \subset \Gamma$. Then a Goeritz matrix $G = (g_{ij})$ of $D$, with $1 \leq i,j  \leq m$, is given by:
$$
g_{ij} = \begin{cases}
-\sum_{e \in C_i \cap C_j} \sigma(e) & i \neq j \\
\sum_{e \in C_i, \text{ $e$ not a bridge}} \sigma(e) & i = j
\end{cases}.
$$
Here $e$ is an edge of $\Gamma$ and $\sigma(e)$ its sign. Equivalence with our previous definition is clear.

It will be advantageous for us to work not just with graphs but with \emph{matroids}, which share many of their properties. A \emph{matroid} is a pair $M = (E, \mathcal{C})$, where $E$ is a finite set, called the \emph{ground set} or \emph{set of points}, and $\mathcal{C}$ a family of subsets of $E$, called \emph{circuits}, such that:
\begin{enumerate}[label=(\roman*)]
\item $\varnothing \notin \mathcal{C}$
\item If $C \in \mathcal{C}$, no proper subset of $C$ is in $\mathcal{C}$.
\item If $C, C' \in \mathcal{C}$ are distinct circuits with $e \in C \cap C'$, then $(C \cup C') - \{e\}$ contains a circuit.
\end{enumerate}
A point $e \in E$ is called a \emph{loop} if $\{e\} \in \mathcal{C}$, and a \emph{coloop} if $e$ is not contained in any circuit. A maximal subset of $E$ which does not contain a circuit is called a \emph{basis}. Every matroid $M$ has a dual matroid $M^*$ over the same ground set, with $(M^*)^* = M$, defined by the condition that a set of points is a basis of $M^*$ if and only if its complement is a basis of $M$.

Any undirected graph $\Gamma = (E,V)$ gives rise to two dual matroids. The \emph{cycle matroid} of $\Gamma$, $M(\Gamma)$, is defined by letting circuits be simple cycles of $\Gamma$. Loops of $\Gamma$ are loops of $M(\Gamma)$, bridges of $\Gamma$ are coloops of $M(\Gamma)$, and spanning forests of $\Gamma$ are bases. A matroid which is isomorphic to the cycle matroid of some graph is called \emph{graphic}, and we will sometimes conflate a graph with its cycle matroid. Alternatively, the dual matroid $M^*(\Gamma)$ is called the \emph{bond matroid} of $\Gamma$, and is denoted $B(\Gamma)$. Circuits of $B(\Gamma)$, called \emph{bonds}, are minimal collections of edges, which, when removed from $\Gamma$, increase its number of connected components. A matroid isomorphic to the bond matroid of some graph is called \emph{cographic}. A graphic matroid $M(\Gamma)$ is cographic if and only if the underlying graph $\Gamma$ is planar \cite{w32}. In this case $M(\Gamma) = B(\Gamma^*)$, where $\Gamma^*$ is the planar dual of $\Gamma$.

Finally, by a \emph{colored matroid}, we indicate a matroid equipped with a function $\sigma : E \to \Z$. A colored matroid is \emph{signed} if $\text{Im}(\sigma) \subset \{-1, 1\}$. We adopt the convention that if $M$ is a matroid with coloring $\sigma$, its dual matroid $M^*$ has coloring $-\sigma$. In particular, if $\Gamma$ is a colored graph, its cycle matroid $M(\Gamma)$ inherits the same coloring function $\sigma$, while the bond matroid $B(\Gamma)$ inherits $-\sigma$. This matches the case of dual Tait graphs of a link diagram, where dual edges have opposite signs.

\subsection{Cographic Matroids and 2-Bases}

For a matroid $M$ with ground set $E$, let $[E]$ denote the $\Z/2$-vector space generated by $E$. If $A \subset E$, let $\bar{A} \in [E]$ be the sum $\bar{A} = \sum_{a \in A} a$. For any two sets $A, B \subset E$, $\bar{A} + \bar{B} = \overline{A \Delta B}$, where $A \Delta B$ is the symmetric difference $A \cup B - A \cap B$. The following idea of a $2$-basis was introduced by Mac Lane in the context of planar graphs.

\begin{defn}[\protect \cite{m37}]
\label{def:scb}
Let $M = (E, \mathcal{C})$ be a matroid, and define the \emph{circuit space} of $M$ to be the subspace of $[E]$ generated by the set $\{\bar{C} \mid C \in \mathcal{C} \}$. A \emph{2-basis} is a set $\mathcal{B} = \{C_1, \dots, C_m\} \subset \mathcal{C}$, with $\{\bar{C}_1, \dots, \bar{C}_m\}$ a basis for the circuit space of $M$, such that for any three distinct $C_i, C_j, C_k \in \mathcal{B}$, $C_i \cap C_j \cap C_k = \varnothing$.
\end{defn}

A matroid is \emph{binary} if the symmetric difference of any two circuits is a disjoint union of circuits. Welsh showed cographic matroids are characterized among binary matroids by the existence of a 2-basis.

\begin{thm}[\protect \cite{w69}]
\label{thm:star_binary}
A binary matroid admits a 2-basis if and only if it is cographic.
\end{thm}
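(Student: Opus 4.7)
My plan is to prove both directions of the biconditional by translating between the combinatorics of a 2-basis and the combinatorics of an associated graph. The key algebraic fact I will exploit throughout is that a binary matroid is determined by its circuit space, since its circuits are precisely the minimal non-zero support vectors in that space over $\Z/2$. Thus if I can produce a graph $\Gamma$ whose cut space (as a subspace of $[E]$) coincides with the circuit space of $M$, then $B(\Gamma) = M$ as binary matroids.

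For the forward direction (cographic implies 2-basis), I would suppose $M = B(\Gamma)$ and produce a 2-basis directly from the vertex stars of $\Gamma$. The natural candidate is $\{\partial(v) : v \in V(\Gamma) \setminus \{v_0\}\}$ for some fixed base vertex $v_0$. The triple-emptiness condition is automatic since an edge has only two endpoints, so no three vertex stars share a common element, and the vectors $\overline{\partial(v)}$ are standardly known to span the cut space of $\Gamma$, which is precisely the circuit space of $B(\Gamma)$; dropping one vertex yields a basis when $\Gamma$ is connected. The one real subtlety is that $\partial(v)$ is a bond only when $v$ is not a cut vertex. To handle this in general, I would reduce to the 2-connected case via block decomposition: bonds of $\Gamma$ lie entirely within a single block, and the cut space of $\Gamma$ splits as the direct sum of the cut spaces of its blocks, so a 2-basis assembles from 2-bases of individual blocks, together with singleton bonds $\{e\}$ for each bridge, while loops of $\Gamma$ (being coloops of $B(\Gamma)$) contribute nothing to the circuit space.

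For the reverse direction (2-basis implies cographic), I would reconstruct a graph $\Gamma$ from a 2-basis $\mathcal{B} = \{C_1, \dots, C_m\}$. The 2-basis condition forces each $e \in E$ to lie in at most two elements of $\mathcal{B}$, since membership in three would supply a nonempty triple intersection. Taking vertex set $\{v_0, v_1, \dots, v_m\}$, I would place, for each $e \in E$, an edge between $v_i$ and $v_j$ when $e \in C_i \cap C_j$, an edge between $v_i$ and $v_0$ when $e$ lies in $C_i$ alone, and a loop at $v_0$ when $e$ lies in no element of $\mathcal{B}$. By construction $\partial(v_i) = C_i$ in $[E]/2$ for each $i \geq 1$, so the cut space of $\Gamma$ is exactly $\mathrm{span}(\bar{C}_1, \dots, \bar{C}_m)$, which equals the circuit space of $M$. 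Since both $M$ and $B(\Gamma)$ are binary matroids on the same ground set sharing a circuit space, the minimal-support characterization forces $M = B(\Gamma)$.

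The main obstacle will be the case analysis in the forward direction, where cut vertices, bridges, parallel edges, and loops each require slightly different handling through the block decomposition; this is where the proof is least automatic. A secondary technical point in the reverse direction is confirming that the reconstructed $\Gamma$ is connected, but this comes for free from a dimension count: a graph on $m + 1$ vertices with $c$ connected components has cut space of dimension $m + 1 - c$, and here this dimension must equal $m$ (since $\mathcal{B}$ is a basis of the circuit space), forcing $c = 1$.
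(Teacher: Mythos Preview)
The paper does not prove this theorem; it is quoted from Welsh \cite{w69}. Immediately after the statement the paper records the vertex-star construction $\mathcal{A} = \{A_1,\dots,A_m\}$ for $B(\Gamma)$ and asserts without verification that it is a 2-basis, but it does not address the converse direction at all.

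Your proposal is essentially the standard Mac Lane--Welsh argument and is correct. For the forward direction you use the same vertex-star construction the paper records, and you correctly flag a subtlety the paper glosses over: when $v_i$ is a cut vertex the star $A_i$ is a cut but not a \emph{minimal} cut, hence not a bond. Your block-decomposition fix is the right way to repair this. For the converse, your reconstruction of $\Gamma$ from the 2-basis (placing $e$ between $v_i$ and $v_j$ when $e\in C_i\cap C_j$, between $v_i$ and $v_0$ when $e$ lies in $C_i$ alone, and as a loop at $v_0$ otherwise) is exactly Welsh's construction, and the key step---that a binary matroid is recovered from its circuit space as the set of minimal nonzero support vectors---is what makes the identification $M = B(\Gamma)$ go through.

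One small remark: in the reverse direction the connectedness of $\Gamma$ is a byproduct rather than an ingredient. Once you have shown that the cut space of $\Gamma$ coincides with the circuit space of $M$, the equality $M = B(\Gamma)$ follows immediately, whether or not $\Gamma$ is connected; the dimension count then tells you \emph{a posteriori} that it is.
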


Let $\Gamma$ be a graph with vertex set $V = \{v_0, v_1, \dots, v_m\}$ and edge set $E$, and define a set of subgraphs $A_i$ by
$$
A_i = \{e \in E \mid e \text{ is not a loop of $\Gamma$}, v_i \text{ is an endpoint of } e\}.
$$
One may check that the set $\mathcal{A} = \{A_1, \dots, A_m\}$ is a 2-basis of the bond matroid $B(\Gamma)$. Every 2-basis of a cographic matroid may be assumed to have this form.

\section{Goeritz Matrices of Cographic Matroids}

In Section 2.3, we defined a Goeritz matrix of a link diagram using a Tait graph. Examining Definition \ref{def:scb}, we see the set of cycles used in the construction forms a 2-basis of the graph. This insight, along with Theorem \ref{thm:star_binary}, leads us to define Goeritz matrices for any colored, cographic matroid.

\begin{defn}
\label{def:goeritz}
Let $M = (E, \mathcal{C}, \sigma)$ be a cographic matroid with coloring $\sigma : E \to \Z$, and let $\mathcal{B} = \{C_1, \dots, C_m\}$ be a 2-basis of $M$. Define a symmetric matrix $G = (g_{ij})$ by:
$$
g_{ij} = \begin{cases}
-\sum_{e \in C_i \cap C_j} \sigma(e) & i \neq j \\
\sum_{e \in C_i} \sigma(e) & i = j
\end{cases}.
$$
Then $G$ is a \emph{Goeritz matrix} of $M$.
\end{defn}

If $\Gamma$ is a Tait graph of a link diagram $D \subset \R^2$, any Goeritz matrix of $D$ (in the classical sense) is a Goeritz matrix of $M(\Gamma)$ by the above definition. The relationship between Definition \ref{def:goeritz} and Definition \ref{def:ill} is more subtle, and will be explained in Section 6.2.

The following, fundamental fact motivates our extension of Goeritz matrices to cographic matroids.

\begin{prop}
\label{thm:fund_goeritz}
Let $G$ be a symmetric, integer matrix. Then there is a colored, cographic matroid $M$ such that $G$ is a Goeritz matrix of $M$.
\end{prop}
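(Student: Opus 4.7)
The plan is to realize an arbitrary $m \times m$ symmetric integer matrix $G = (g_{ij})$ as the Goeritz matrix of the bond matroid of an explicit colored graph. Since every bond matroid is cographic, and since the standard vertex-bond 2-basis of Section 2.4 indexes $m$ of the $m+1$ vertices of the underlying graph, the natural candidate is a graph on $m+1$ vertices in which one distinguished vertex $v_0$ is used solely to pin down the diagonal entries of $G$, while edges among $v_1, \dots, v_m$ encode the off-diagonal entries.

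Concretely, I would take $\Gamma = K_{m+1}$ with vertex set $\{v_0, v_1, \dots, v_m\}$ and edges $e_{ij}$ for $0 \leq i < j \leq m$, and define a coloring $\sigma : E \to \Z$ by
\[
\sigma(e_{ij}) = -g_{ij} \quad (1 \leq i < j \leq m), \qquad \sigma(e_{0i}) = g_{ii} + \sum_{1 \leq j \leq m,\, j \neq i} g_{ij} \quad (1 \leq i \leq m).
\]
Letting $M = B(\Gamma)$ with this coloring and using the 2-basis $\mathcal{B} = \{A_1, \dots, A_m\}$, where $A_i$ is the set of edges incident to $v_i$, Definition \ref{def:goeritz} then gives, for the resulting Goeritz matrix $G' = (g'_{ij})$, the values $g'_{ij} = -\sigma(e_{ij}) = g_{ij}$ for $i \neq j$ (since $A_i \cap A_j = \{e_{ij}\}$) and $g'_{ii} = \sigma(e_{0i}) + \sum_{j \neq i,\, j \geq 1} \sigma(e_{ij}) = g_{ii}$ by direct substitution.

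What remains is to confirm that $\mathcal{B}$ really is a 2-basis of $B(\Gamma)$. This is asserted in Section 2.4, but can be checked directly here: each $A_i$ is a bond because removing it isolates the single vertex $v_i$ from the connected complete graph on the remaining vertices; triple intersections $A_i \cap A_j \cap A_k$ are empty because in a simple graph no edge is incident to three vertices; and the $\bar A_i$ are linearly independent in $[E]$ because $e_{0i}$ appears in $\bar A_i$ but in no other $\bar A_j$ with $1 \leq j \leq m$, so they span the $m$-dimensional cut space. The construction has no serious obstacle: since the off-diagonal entries are fully determined by $\sigma(e_{ij})$ for $i, j \geq 1$, and the diagonal entries can then be adjusted independently using the $m$ free colors $\sigma(e_{01}), \dots, \sigma(e_{0m})$, the only point requiring care is matching the sign conventions in Definition \ref{def:goeritz}.
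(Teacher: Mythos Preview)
Your proof is correct and is essentially the same construction as the paper's: the complete graph $K_{m+1}$ with the vertex-bond 2-basis $\{A_1,\dots,A_m\}$, colors on the edges among $v_1,\dots,v_m$ encoding the off-diagonal entries, and colors on the edges $e_{0i}$ fixing the diagonal. The paper defines $\sigma$ as a coloring on $\Gamma$ and then invokes the convention that $B(\Gamma)$ carries $-\sigma$, whereas you assign $\sigma$ directly to $M=B(\Gamma)$; after unwinding the sign flip these are literally the same colored matroid, so your explicit check that $\mathcal{B}$ is a 2-basis (which the paper just cites from Section~2.4) is the only substantive addition. Just be careful that your phrase ``$M=B(\Gamma)$ with this coloring'' is read as equipping $M$ itself with $\sigma$, not as coloring $\Gamma$ and then dualizing via the paper's convention, since the latter would introduce an unwanted global sign.
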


\begin{proof}
Suppose $G = (g_{k\ell})$ has dimension $m$. Let $\Gamma = (E,V)$ be the fully connected, simple graph on $m + 1$ vertices $V = \{v_0, v_1, \dots, v_m\}$, and for $0 \leq i < j \leq m$, let $e_{ij}$ be the unique edge connecting $v_i$ and $v_j$. Define a coloring $\sigma : E \to \Z$ on $\Gamma$ by
$$
\sigma(e_{ij}) = \begin{cases}
g_{ij} & i \neq 0 \\
-\sum_{k = 0}^m g_{k j} & i = 0
\end{cases}.
$$
Let $M = B(\Gamma)$. As in Section 2.4, the set $\{C_1, \dots, C_m\}$, where
$$
C_i = \{e \in E \mid v_i \text{ is an endpoint of } e\} = \{e_{k\ell} \in E \mid k = i \text{ or } \ell = i \},
$$
is a 2-basis of $M$. Recalling our convention that the coloring function of $B(\Gamma)$ is $-\sigma$, we calculate, for fixed $i < j \in \{1, \dots, m\}$,
$$
-\sum_{e \in C_i \cap C_j} -\sigma(e) = \sigma(e_{ij}) = g_{ij},
$$
and
$$
\sum_{e \in C_i} -\sigma(e) = \sum_{k = i \text{ or } \ell = i} -\sigma(e_{k \ell}) = \sum_{k = 0}^m g_{ik} - \sum_{k \neq i} g_{ik} = g_{ii}.
$$
Thus, the Goeritz matrix of $M$ is precisely $G$.
\end{proof}

In contrast to Proposition \ref{thm:fund_goeritz}, not every symmetric, integer matrix is a Goeritz matrix of a planar graph. This can be proven, for example, using the Four-Color Theorem, which implies that any five-by-five Goeritz matrix of a planar graph must contain a $0$.

Recall that a signed matroid is a colored matroid $M = (E, \mathcal{C}, \sigma)$ with Im$(\sigma) \subset \{-1, 1\}$.
\begin{cor}
\label{thm:all_signed}
Let $G$ be a symmetric, integer matrix. Then there is a signed, cographic matroid $M$ such that $G$ is a Goeritz matrix of $M$.
\end{cor}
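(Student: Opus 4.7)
The plan is to reduce this to Proposition~\ref{thm:fund_goeritz} by upgrading the integer coloring produced there to a signed coloring, at the cost of replacing the underlying graph by a multigraph. Given a symmetric integer matrix $G$ of size $m$, Proposition~\ref{thm:fund_goeritz} provides a complete graph $\Gamma$ on $m+1$ vertices with an integer edge coloring $\sigma$, such that the canonical vertex-cut family $\{C_1,\dots,C_m\}$ realizes $G$ as the Goeritz matrix of $B(\Gamma)$.

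To convert the coloring to one taking values in $\{\pm 1\}$, I would construct a multigraph $\Gamma'$ on the same vertex set by replacing each edge $e_{ij}$ of $\Gamma$ (with $\sigma(e_{ij})=k$) by a bundle consisting of $k_+ + 1$ edges of sign $+1$ and $k_- + 1$ edges of sign $-1$, where $k_\pm=\max(\pm k,0)$. The net signed weight between $v_i$ and $v_j$ in $\Gamma'$ is then $k$, matching $\sigma(e_{ij})$, and the extra canceling pair of edges guarantees that every pair of vertices in $\Gamma'$ remains adjacent. I would then let $M=B(\Gamma')$ with the induced signed coloring, which takes values in $\{\pm 1\}$ by construction.

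The final step is to check that the analogous vertex-cut family $\{C'_1,\dots,C'_m\}$ in $\Gamma'$—where $C'_i$ is the set of all edges of $\Gamma'$ incident to $v_i$—remains a 2-basis of $B(\Gamma')$. Each $C'_i$ is a bond because $\Gamma'\setminus v_i$ is still connected (by the canceling pairs); triple intersections vanish since $\Gamma'$ is loopless; and the vectors $\bar{C}'_i$ span the $\mathbb{Z}/2$-cocycle space of $\Gamma'$ subject only to the single relation $\sum_{i=0}^m \bar{C}'_i=0$. A direct computation via Definition~\ref{def:goeritz} then recovers each $g_{ij}$ as the net signed count of edges between $v_i$ and $v_j$, and the diagonal entries follow exactly as in the proof of Proposition~\ref{thm:fund_goeritz}.

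The main obstacle—really a subtlety rather than a genuine difficulty—is this last 2-basis verification. Naïvely replacing each $e_{ij}$ by $|k|$ parallel edges of sign $\operatorname{sgn}(k)$, with no edge when $k=0$, can leave $\Gamma'\setminus v_i$ disconnected and therefore destroy the bond property of $C'_i$. Inserting the extra canceling $(+1,-1)$ pair between every pair of vertices safeguards against this without altering the Goeritz matrix, and this is the only place where care is required.
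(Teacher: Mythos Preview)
Your argument is correct and follows essentially the same route as the paper: start from the colored complete graph of Proposition~\ref{thm:fund_goeritz} and replace each integer-weighted edge by a parallel bundle of $\pm 1$-signed edges with the same net weight, then take the bond matroid with the vertex-star $2$-basis. The only cosmetic difference is that you insert an extra canceling $(+1,-1)$ pair into \emph{every} bundle, whereas the paper does so only for the weight-zero edges (and otherwise allows deletion when it does not disconnect); your choice makes the $2$-basis verification uniformly clean, and you are right to flag that this is the one place where care is needed.
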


\begin{proof}
Given $G$, let $\Gamma$ be the colored graph constructed in the proof of Proposition \ref{thm:fund_goeritz}. We will change $\Gamma$ to produce a signed graph $\Gamma'$ so that $B(\Gamma)$ and $B(\Gamma')$ have the same Goeritz matrices. If $e \in E$ connects vertices $v_i$ and $v_j$ in $\Gamma$, with $\sigma(e) = n > 0$, we replace $e$ with $n$ edges connecting $v_i$ and $v_j$, all with sign $1$. If $\sigma(e) < 0$, we perform the same operation with negatively signed edges. Finally, if $\sigma(e) = 0$, we replace $e$ with one positive and one negative edge, or delete it from the graph entirely if doing so does not disconnect the graph. The result of these operations is $\Gamma'$.
\end{proof}

To prove our polynomial $\mu$ of Definition \ref{def:new_bracket} is well-defined, we will need to understand how the Goeritz matrix of a cographic matroid changes under certain contractions and deletions. Given a matroid $M = (E, \mathcal{C})$, let $A \subset E$. The \emph{deletion of $M$ with respect to $A$}, denoted $M \setminus A$, is the matroid with ground set $E - A$ and circuits
$$
\mathcal{C}(M \setminus A) = \{C \subset E - A \mid C \in \mathcal{C}(M)\}.
$$
The \emph{contraction of $M$ by $A$}, denoted $M/A$, is defined by $M/A = (M^* \setminus A)^*$. Equivalently, if
$$
\tilde{\mathcal{C}} =  \{C \subset E - A \mid \text{there exists $B \subset A$ such that }C \cup B \in \mathcal{C}(M) \},
$$
then $\mathcal{C}(M/A)$ is the set of minimal elements of $\tilde{\mathcal{C}}$. If $e$ is a loop or coloop of $M$, $M/ \{e\} = M \setminus \{e\}$.

If $M$ is graphic, contraction and deletion operations on $M$ correspond to the familiar contraction and deletion operations on an underlying graph. If $M = (E, \mathcal{C}, \sigma)$ is colored and $A \subset E$, restricting $\sigma$ to $E - A$ induces a coloring on $M \setminus A$ and $M/A$.

In the lemmas that follow, let $M = (E, \mathcal{C}, \sigma)$ be a colored cographic matroid, and $\mathcal{B} = \{C_1, \dots, C_m\}$ a 2-basis of $M$ with Goeritz matrix $G = (g_{k\ell})$. The matrices $G'_{ij}$, $G''_{ij}$, and $G'_i$ are as in Definition \ref{def:matrix_1}.

\begin{lemma}
\label{thm:twist_change_1}
For fixed, distinct $C_i, C_j \in \mathcal{B}$ with $C_i \cap C_j \neq \varnothing$, let $E_{ij} = C_i \cap C_j$.
\begin{enumerate}[label=(\alph*)]
\item $M / E_{ij}$ has an $m$-by-$m$ Goeritz matrix equal to $G'_{ij}$.
\item $M \setminus E_{ij}$ has an $(m - 1)$-by-$(m - 1)$ Goeritz matrix equal to $G''_{ij}$.
\end{enumerate}
\end{lemma}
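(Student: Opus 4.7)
The plan is to realize $M$ concretely as a bond matroid. By Welsh's theorem and the discussion following Theorem~\ref{thm:star_binary}, we may assume $M = B(\Gamma)$ for some graph $\Gamma$ with vertex set $\{v_0, v_1, \ldots, v_m\}$, so that the given 2-basis $\mathcal{B}$ is the vertex-incidence 2-basis $C_k = \{e \in E : e \text{ is not a loop of } \Gamma \text{ and } v_k \text{ is an endpoint of } e\}$. Under this identification, $E_{ij} = C_i \cap C_j$ is exactly the set of parallel edges joining $v_i$ and $v_j$. By cographic duality, matroid contraction in $M$ corresponds to edge-deletion in $\Gamma$ and matroid deletion in $M$ to edge-contraction in $\Gamma$, so both parts reduce to writing down the vertex-incidence Goeritz matrix of a modified graph.

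For part (a), $M/E_{ij} = B(\Gamma')$ where $\Gamma' = \Gamma \setminus E_{ij}$ (with the same vertex set). Its vertex-incidence 2-basis has $C_k' = C_k$ for $k \neq i,j$, while $C_i' = C_i \setminus E_{ij}$ and $C_j' = C_j \setminus E_{ij}$. The 2-basis condition $C_i \cap C_j \cap C_k = \varnothing$ forces $E_{ij} \cap C_k = \varnothing$ for $k \notin \{i, j\}$, so all entries other than those in positions $(i,i), (j,j), (i,j), (j,i)$ are unchanged. Since $C_i' \cap C_j' = \varnothing$ and $\sum_{e \in E_{ij}}\sigma(e) = -g_{ij}$, direct computation gives $g_{ij}' = 0$ and $g_{ii}' = g_{ii} + g_{ij}$, with the symmetric statement for $g_{jj}'$. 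This matches $G_{ij}'$.

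For part (b), $M \setminus E_{ij} = B(\Gamma'')$ where $\Gamma''$ contracts every edge of $E_{ij}$, identifying $v_i$ and $v_j$ into a single vertex; the remaining edges of $E_{ij}$ become loops and leave the 2-basis. The new merged circuit is $C_i'' = C_i \Delta C_j$. The key identity, using $C_i \cap C_j \cap C_k = \varnothing$, is
\[
(C_i \Delta C_j) \cap C_k = (C_i \cap C_k) \sqcup (C_j \cap C_k),
\]
which yields $g_{ik}'' = g_{ik} + g_{jk}$ for $k \neq i, j$; an inclusion--exclusion sum over $C_i \Delta C_j$ gives $g_{ii}'' = g_{ii} + g_{jj} + 2g_{ij}$, matching $G_{ij}''$. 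The main obstacle will be the graph-theoretic bookkeeping in part (b)---verifying that the modified set really is the vertex-incidence 2-basis of $\Gamma''$, including handling degenerate cases where contraction creates new loops or isolates a vertex---but once this translation between matroid operations and graph operations is secured, the Goeritz entries are forced by the two set-theoretic identities displayed above.
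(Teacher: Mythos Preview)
Your proposal is correct and follows essentially the same route as the paper: realize $M$ as a bond matroid using the explicit vertex-incidence 2-basis of Section~2.4, translate contraction/deletion in $M$ into deletion/contraction in the underlying graph, and then read off the new 2-bases $\{C_k\setminus E_{ij}\}$ for part~(a) and $\{C_i\Delta C_j\}\cup\{C_k:k\neq i,j\}$ for part~(b). You have simply supplied the entry-by-entry verification that the paper leaves to the reader, and your two set-theoretic identities (both consequences of the 2-basis condition $C_i\cap C_j\cap C_k=\varnothing$) are exactly the right tools for that check.
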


\begin{proof}
The lemma is easy to verify using the explicit $2$-basis given in Section 2.4, and the fact that contraction and deletion are dual operations. To see how deletion (resp. contraction) affects the bond matroid of a graph $\Gamma$, we can perform the corresponding contraction (resp. deletion) on $\Gamma$ and examine the bond matroid of the resulting graph---see Figure \ref{fig:con_del}. In part (a), a 2-basis for $M/E_{ij}$ is obtained from $\mathcal{B}$ by restricting each cycle to $E - E_{ij}$---this leaves every cycle unchanged except $C_i$ and $C_j$, and affects the stated changes to the Goeritz matrix.

For part (b), a 2-basis for $M \setminus E_{ij}$ is given by removing $C_j$ from $\mathcal{B}$ and replacing $C_i$ with $C_i \Delta C_j$. Again, it is not difficult to check the Goeritz matrix.
\end{proof}

\begin{figure}[H]
\centering
\subcaptionbox{Contraction}{
\includegraphics[height=2cm]{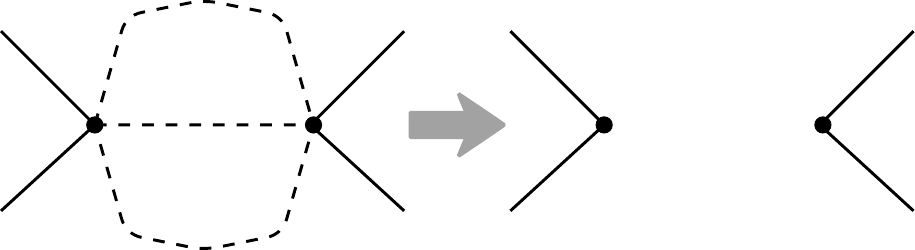}
}
\hspace{1cm}
\subcaptionbox{Deletion}{
\includegraphics[height=2cm]{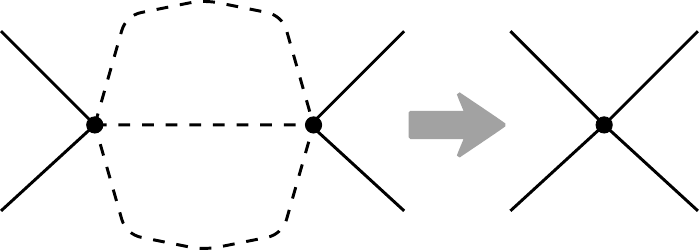}
}
\caption{Contraction and deletion in a bond matroid}
\label{fig:con_del}
\end{figure}

\begin{lemma}
\label{thm:twist_change_2}
For fixed $i$, suppose $C_i$ satisfies $C_i \cap C_j = \varnothing$ for all $j \neq i$. Then $M \setminus C_i$ has an $(m - 1)$-by-$(m - 1)$ Goeritz matrix equal to $G'_i$.
\end{lemma}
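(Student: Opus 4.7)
The strategy is to exhibit $\mathcal{B}' = \{C_1,\dots,\widehat{C_i},\dots,C_m\}$ as a 2-basis of $M \setminus C_i$ and read off the Goeritz matrix of $M\setminus C_i$ directly from this 2-basis. First I would observe that the disjointness hypothesis $C_i \cap C_j = \varnothing$ for all $j \neq i$ places each $C_j$ (for $j\neq i$) inside $E - C_i$, so $C_j$ survives as a circuit of $M \setminus C_i$. The triple-intersection condition in Definition~\ref{def:scb} for $\mathcal{B}'$ is immediate, being inherited from that of $\mathcal{B}$.

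The heart of the argument is showing that $\{\bar{C}_j : j \neq i\}$ is a basis for the circuit space of $M\setminus C_i$. Linear independence is inherited from $\mathcal{B}$ (the $\bar{C}_j$ are already independent in $[E]$, so certainly in $[E - C_i]$). For spanning, given any circuit $C \in \mathcal{C}(M \setminus C_i) \subseteq \mathcal{C}(M)$, I would use the 2-basis property of $\mathcal{B}$ to write $\bar{C} = \sum_{k \in K} \bar{C}_k$ in $[E]$, and then argue that $i \notin K$. Indeed, if $i \in K$, rearranging gives
$$
\bar{C}_i = \bar{C} + \sum_{k \in K,\ k \neq i} \bar{C}_k,
$$
whose right-hand side is supported entirely in $E - C_i$ by the disjointness hypothesis, while the left-hand side is supported in $C_i \neq \varnothing$ --- a contradiction. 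Hence $\bar{C} \in \mathrm{span}\{\bar{C}_j : j \neq i\}$, establishing spanning.

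With $\mathcal{B}'$ identified as a 2-basis of $M \setminus C_i$, the Goeritz matrix computation via Definition~\ref{def:goeritz} is immediate: for $k, \ell \neq i$ the circuits $C_k$ and $C_\ell$, along with the coloring $\sigma|_{E - C_i}$, are all unchanged, so the entries of the new Goeritz matrix match the corresponding entries of $G$, yielding exactly $G'_i$. The main obstacle is the spanning step, where the disjointness hypothesis is essentially deployed via the support-based contradiction above; the remaining verifications are routine bookkeeping.
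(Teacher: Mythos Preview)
Your proposal is correct and follows essentially the same approach as the paper: both arguments verify that $\mathcal{B} - \{C_i\}$ is a 2-basis for $M \setminus C_i$ and then read off the Goeritz matrix directly. The paper leaves this verification as ``straightforward'' (alluding to the explicit graph-based 2-basis of Section~2.4 as in Lemma~\ref{thm:twist_change_1}), whereas you supply the details abstractly at the matroid level, including the support-based contradiction for the spanning step; this is a minor expository difference rather than a different route.
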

\begin{proof}
The proof technique is the same as that of Lemma \ref{thm:twist_change_1}. It is straightforward to check that $\mathcal{B} - \{C_i\}$ is a 2-basis for $M \setminus C_i$, with the desired Goeritz matrix.
\end{proof}

\section{The Polynomial $\mu$}

\subsection{Thistlethwaite's Polynomial $\tau$}

In \cite{t87}, Thistlethwaite introduced a one-variable Laurent polynomial $\tau$ associated to signed matroids. The polynomial $\tau$ may be defined recursively by the relations:
\begin{enumerate}[label=(\roman*)]
\item If $M$ is the empty matroid, $\tau[M] = 1$.
\item Let $e \in E$ be such that $e$ is neither a loop nor coloop. If $\sigma(e)  = +1$, then
$$
\tau[M] = A\tau[M / \{e\}] + A^{-1}\tau[M\setminus\{e\}].
$$
If $\sigma(e) = -1$, then
$$
\tau[M] = A^{-1}\tau[M / \{e\}] + A\tau[M\setminus\{e\}].
$$
\item If $e \in E$ is a loop with $\sigma(e) = -1$, or a coloop with $\sigma(e) = +1$, then
$$
\tau[M] = (-A)^{-3}\tau[M \setminus \{e\}].
$$
If $e \in E$ is a loop with $\sigma(e) = +1$, or a coloop with $\sigma(e) = -1$, then
$$
\tau[M] = (-A)^3\tau[M \setminus \{e\}].
$$
\end{enumerate}

Thistlethwaite's polynomial was subsequently generalized by Kauffman \cite{k89}, and further in \cite{t89, br99}. Thistlethwaite proved that if $D \subset \R^2$ is a non-split link diagram, and $\Gamma$ a Tait graph of $D$, then
\begin{equation}
\label{eq:mt}
\langle D \rangle = \tau[\Gamma].
\end{equation}

\begin{rmk}
Thistlethwaite defined $\tau$ as a signed graph invariant rather than a matroid invariant, and included the stipulation that if $\Gamma$ is a graph with $k$ connected components $\Gamma_1, \Gamma_2, \dots, \Gamma_k$, then
$$
\tau[\Gamma] = (-A^{-2} - A^2)^{k - 1} \tau[\Gamma_1] \tau[\Gamma_2] \cdots \tau[\Gamma_k].
$$
With this relation, (\ref{eq:mt}) holds even for split link diagrams. This requirement does not make sense for matroids, however, since graphs with different numbers of components may have isomorphic cycle matroids.
\end{rmk}

\begin{rmk}
\label{rmk:duality}
With our convention that dual matroids $M$ and $M^*$ have opposite-signed points, the polynomial $\tau$ satisfies $\tau[M] = \tau[M^*]$.
\end{rmk}

To prove Theorem \ref{thm:main_one}, we need two technical lemmas. As in the previous section, let $M = (E, \mathcal{C}, \sigma)$ be a signed cographic matroid and $\mathcal{B} = \{C_1, \dots, C_m\}$ a 2-basis of $M$. For $A \subset E$, define
$$
\sigma(A) = \sum_{e \in A} \sigma(e).
$$

Additionally, recall from Definition \ref{def:polys} that, if $n \neq 0$,
$$
P_n(A) = \sum_{k = 1}^{|n|} (-1)^{k - 1} A^{\text{sgn}(n)(|n| - 4k+ 2)},
$$
and $P_0(A) \equiv 0$.

\begin{lemma}
\label{thm:tech_1}
As in Lemma \ref{thm:twist_change_1}, let $C_i, C_j \in \mathcal{B}$ be distinct, with $E_{ij} = C_i \cap C_j \neq \varnothing$. Then
\begin{equation}
\label{eq:tech}
\tau[M] = A^n \tau[M/E_{ij}] + P_n(A)\tau[M \setminus E_{ij}].
\end{equation}
\end{lemma}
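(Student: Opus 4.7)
The plan is to set $n = \sigma(E_{ij}) = -g_{ij}$ and to induct on $k = |E_{ij}| \geq 1$, applying Thistlethwaite's rule (ii) to one edge of $E_{ij}$ at a time. Since $M$ is cographic, by Theorem \ref{thm:star_binary} I may represent $M = B(\Gamma)$ with $\Gamma$ chosen (as in Section 2.4) so that $C_i$ and $C_j$ are the bonds at two vertices $v_i, v_j$ joined by exactly the edges of $E_{ij}$ in parallel. The base case $k = 1$ is immediate from rule (ii) together with the identities $P_{\pm 1}(A) = A^{\mp 1}$, which are direct from Definition \ref{def:polys}.

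For $k \geq 2$, pick $e \in E_{ij}$, write $\epsilon = \sigma(e)$, and set $E' = E_{ij}\setminus\{e\}$, so that $\sigma(E') = n - \epsilon$. Since $e \in C_i \cap C_j$ it is neither a loop nor a coloop of $M$, so rule (ii) yields
$$
\tau[M] = A^\epsilon \tau[M/\{e\}] + A^{-\epsilon}\tau[M\setminus\{e\}].
$$
Using the bond/graph duality from Section 2.3 and Figure \ref{fig:con_del}, contraction in the bond matroid corresponds to deletion in $\Gamma$, so $M/\{e\} = B(\Gamma\setminus\{e\})$; its 2-basis still contains $C_i\setminus\{e\}$ and $C_j\setminus\{e\}$, with intersection $E'$ of size $k-1$. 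The inductive hypothesis therefore gives
$$
\tau[M/\{e\}] = A^{n-\epsilon}\tau[M/E_{ij}] + P_{n-\epsilon}(A)\tau[M\setminus E_{ij}],
$$
after the identifications $(M/\{e\})/E' = M/E_{ij}$ and $(M/\{e\})\setminus E' = M\setminus E_{ij}$, which are immediate at the graph level.

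Dually, $M\setminus\{e\} = B(\Gamma/\{e\})$; in $\Gamma/\{e\}$ the vertices $v_i, v_j$ are merged, and each edge of $E'$ becomes a loop at the merged vertex, i.e.\ a coloop of $M\setminus\{e\}$. Iterating Thistlethwaite's rule (iii) over $E'$ contributes $(-A)^{-3}$ for each $e' \in E'$ with $\sigma(e') = +1$ and $(-A)^{+3}$ for each $e' \in E'$ with $\sigma(e') = -1$, so
$$
\tau[M\setminus\{e\}] = (-A)^{-3(n-\epsilon)}\tau[M\setminus E_{ij}].
$$
Substituting both expressions into the rule-(ii) expansion and collecting the coefficient of $\tau[M\setminus E_{ij}]$ reduces the lemma to the single polynomial identity
$$
P_n(A) = A^\epsilon P_{n-\epsilon}(A) + A^{-\epsilon}(-A)^{-3(n-\epsilon)}, \qquad \epsilon \in \{\pm 1\}.
$$

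The hard part of the proof, to the extent there is one, is this last identity. For $\epsilon = +1$ the second summand simplifies to $(-1)^{n-1}A^{-3n+2}$, which is precisely the $j = |n|$ term missing from $A\cdot P_{n-1}(A)$ in the explicit sum defining $P_n(A)$; the sign cases $n>0$, $n=0$, and $n<0$ each require a short separate check, and $\epsilon = -1$ follows by a symmetric expansion (or, equivalently, by substituting $A \leftrightarrow A^{-1}$ and $n \leftrightarrow -n$). Everything else is organizational bookkeeping, routinely justified by Theorem \ref{thm:star_binary} and the bond/cycle duality recorded in Section 2.3.
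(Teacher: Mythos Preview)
Your proof is correct and follows essentially the same strategy as the paper: peel off one edge $e\in E_{ij}$ via rule (ii), observe that the remaining edges of $E_{ij}$ become coloops in $M\setminus\{e\}$, and close the argument with the recurrence $P_n(A)=A^{\epsilon}P_{n-\epsilon}(A)+(-1)^{n-\epsilon}A^{-\epsilon-3(n-\epsilon)}$.

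The only organizational difference is the induction variable. The paper inducts on $n=\sigma(E_{ij})$, which forces a separate treatment of the base case $n=0$: there one must repeatedly cancel oppositely-signed pairs $e_+,e_-\in E_{ij}$ to shrink $E_{ij}$ down to nothing. Your induction on $k=|E_{ij}|$ avoids this detour entirely, since the base case $k=1$ is literally rule (ii) and the inductive step handles either sign of $e$ uniformly. This is a genuine simplification; the trade-off is that you must verify the $P_n$ recurrence for all three sign regimes of $n$ (and both values of $\epsilon$), whereas the paper, having fixed $n\geq 0$ and always peeling a positive edge, only needs the single identity $P_n(A)=AP_{n-1}(A)+(-1)^{n-1}A^{-3n+2}$. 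Your symmetry argument $P_{-n}(A)=P_n(A^{-1})$ handles this cleanly.
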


\begin{proof}
Observe that, as a proper subset of the circuit $C_i$, no element of $E_{ij}$ is a loop or coloop. We'll prove the result for $n \geq 0$ by induction---the case of negative $n$ is analogous.

If $n = 0$, (\ref{eq:tech}) reduces to $\tau[M] = \tau[M/E_{ij}]$. Because $\sigma(E_{ij}) = 0$ and $E_{ij} \neq \varnothing$, $E_{ij}$ must contain two elements $e_+$ and $e_-$ such that $\sigma(e_\pm) = \pm 1$. Observe that $e_-$ is a coloop in $M \setminus \{e_+\}$. Indeed, as in Lemma \ref{thm:twist_change_1}, a basis for the circuit space of $M \setminus \{e_+\}$ can be obtained from $\mathcal{B}$ by removing $C_j$ and replacing $C_i$ with the symmetric difference $C_i \Delta C_j$. Since $e_-$ is not contained in any element of this new basis, $e_-$ is not contained in any circuit of $M \setminus \{e_+\}$. Similarly, $e_+$ is a coloop of $M \setminus \{e_-\}$. Finally, note that $e_-$ is \emph{not} a loop or coloop of $M /\{e_+\}$, since it is a proper subset of the circuit $C_i - \{e_+\}$. Applying these facts and the defining properties of $\tau$, we compute
\begin{align*}
\tau[M] &= A \tau[M /\{e_+\}] + A^{-1} \tau[M \setminus \{e_+\}] \\
&= \tau[(M/\{e_+\})/\{e_-\}] + A^2 \tau[(M/\{e_+\}) \setminus \{e_-\}] - A^2 \tau[(M \setminus \{e_+\}) \setminus \{e_-\}] \\
&= \tau[M/\{e_+, e_-\}] + A^2 \tau[M \setminus \{e_+, e_-\}] - A^2 \tau[M \setminus \{e_+, e_-\}] \\
&= \tau[M/\{e_+, e_-\}].
\end{align*}
The third equation uses basic properties of deletion and contraction---see \cite[Ch.~3]{o11}. By repeatedly deleting pairs of elements with opposite signs from $E_{ij}$, the above calculation implies $\tau[M] = \tau[M/E_{ij}]$ when $n = 0$.

Next, if $\sigma(E_{ij}) = n > 0$, then there is at least one $e \in E_{ij}$ such that $\sigma(e) = 1$. It follows that
$$
\tau[M] = A \tau[M / \{e\}] + A^{-1} \tau[M \setminus \{e\}].
$$
If $n = 1$, the above equation is all we needed to show. If not, similar to Lemma \ref{thm:twist_change_1}, the 2-basis $\mathcal{B}$ induces a 2-basis $\mathcal{B}' = \{C_1', \dots, C'_m\}$ of $M / \{e\}$, where $C_k'$ is the restriction of $C_k$ to $E - \{e\}$. Let $E'_{ij} = C'_i \cap C'_j = E_{ij} - \{e\}$; then $\sigma(E'_{ij}) = n - 1$. By the induction hypothesis, the above equation becomes
\begin{align}
\tau[M] &= A^n \tau[(M / \{e\}) / E'_{ij} ] + A P_{n - 1}(A) \tau[(M / \{e\}) \setminus E'_{ij}] + A^{-1} \tau[M \setminus \{e\}] \nonumber \\
&= A^n \tau[M/E_{ij}] + A P_{n - 1}(A) \tau[(M \setminus E'_{ij}) \setminus \{e\}] + A^{-1} \tau[M \setminus \{e\}] \nonumber \\
&= A^n \tau[M/E_{ij}] + A P_{n - 1}(A) \tau[E \setminus E_{ij}] + A^{-1} \tau[M \setminus \{e\}].
\label{eq:stuff}
\end{align}
The second equation follows again from properties of deletion and contraction, using the fact that $e$ is a coloop of $M \setminus E'_{ij}$. Additionally, as in the base case, every element of $E'_{ij}$ is a coloop of $M \setminus \{e\}$. Using the relation (ii) defining $\tau$,
$$
\tau[M \setminus \{e\}] = (-A^{-3})^{n - 1} \tau[(M \setminus \{e\}) \setminus E'_{ij}] = (-1)^{n - 1}A^{-3n + 3} \tau[M \setminus E_{ij}].
$$
Now (\ref{eq:stuff}) becomes
$$
\tau[M] = A^n \tau[M/E_{ij}] + \big(A P_{n - 1}(A) + (-1)^{n - 1} A^{-3n + 2}\big) \tau[M \setminus E_{ij}],
$$
and the proof is completed by showing inductively that $P_n(A) = A P_{n - 1}(A) + (-1)^{n - 1}A^{-3n + 2}$.
\end{proof}

\begin{lemma}
\label{thm:tech_2}
As in Lemma \ref{thm:twist_change_2}, suppose $C_i \in \mathcal{B}$ satisfies $C_i \cap C_j = \varnothing$ for all $j \neq i$, and $\sigma(C_i) = n$. Then
$$
\tau[M] = \big(A^n(-A^{-2} - A^2) + P_n(A)\big) \tau[M \setminus C_i].
$$
\end{lemma}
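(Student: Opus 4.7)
The plan is to induct on $|C_i|$, mirroring the structure of the proof of Lemma \ref{thm:tech_1}. In the base case $|C_i| = 1$, the single element $e$ of $C_i$ is a loop of $M$ with $\sigma(e) = n = \pm 1$, and the claim reduces to a direct check: for $n = 1$, relation (iii) gives $\tau[M] = (-A)^3 \tau[M \setminus \{e\}]$, and one verifies $A(-A^{-2} - A^2) + P_1(A) = -A^{-1} - A^3 + A^{-1} = -A^3 = (-A)^3$. The case $n = -1$ is symmetric.

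For the inductive step, suppose $|C_i| \geq 2$ and choose $e \in C_i$. Since $e$ lies in the circuit $C_i$ it is not a coloop, and since $|C_i| \geq 2$ it is not a loop, so relation (ii) applies. Assume first $\sigma(e) = +1$ (the other case is analogous); then
$$
\tau[M] = A \tau[M/\{e\}] + A^{-1} \tau[M \setminus \{e\}].
$$
For $\tau[M/\{e\}]$, the 2-basis $\mathcal{B}$ induces one on $M/\{e\}$ in which $C_i$ is replaced by $C_i' := C_i - \{e\}$ and the remaining $C_k$ are unchanged; the disjointness hypothesis is preserved, and $\sigma(C_i') = n - 1$. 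The inductive hypothesis gives
$$
\tau[M/\{e\}] = \bigl(A^{n-1}(-A^{-2} - A^2) + P_{n-1}(A)\bigr)\, \tau[M \setminus C_i],
$$
using $(M/\{e\}) \setminus C_i' = M \setminus C_i$.

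For $\tau[M \setminus \{e\}]$, the central claim is that every element of $C_i - \{e\}$ is a coloop of $M \setminus \{e\}$. This follows from binarity together with the disjointness of $C_i$ from the other basis members: any circuit of $M \setminus \{e\}$ is a circuit of $M$ avoiding $e$, and as a symmetric difference of 2-basis elements it can contain an element of $C_i$ only by including $C_i$ itself, forcing $e$ to appear, a contradiction. Iterating relation (iii) to peel off these $n_+ - 1$ positive and $n_-$ negative coloops (where $n = n_+ - n_-$) yields
$$
\tau[M \setminus \{e\}] = (-A)^{-3(n-1)}\, \tau[M \setminus C_i].
$$

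Assembling the two pieces gives a coefficient of $A^n(-A^{-2} - A^2) + A\, P_{n-1}(A) + (-1)^{n-1} A^{-3n+2}$, which collapses to $A^n(-A^{-2} - A^2) + P_n(A)$ via the same elementary identity $P_n(A) = A\, P_{n-1}(A) + (-1)^{n-1} A^{-3n+2}$ invoked at the end of Lemma \ref{thm:tech_1}. The main obstacle is establishing the coloop claim; once it is in place, the rest is bookkeeping already essentially performed in the previous lemma, and the $\sigma(e) = -1$ branch of the inductive step requires only switching the roles of positive and negative elements throughout.
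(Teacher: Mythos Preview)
Your argument is correct, and the overall shape---split off one element via relation (ii), apply induction to the contraction, use the coloop observation on the deletion, and collapse via the $P_n$ recursion---matches the paper's. The genuine difference is the induction variable: you induct on $|C_i|$, while the paper inducts on $n = \sigma(C_i)$. This means the paper must separately treat the base case $n = 0$ (where no element of a preferred sign is available) by first stripping off opposite-signed pairs $e_+, e_-$ as in Lemma~\ref{thm:tech_1} until $|C_i| = 2$, and then computing directly; it also handles $n = 1$ as a second base case. Your scheme avoids that case split entirely---the single base case $|C_i| = 1$ covers $n = \pm 1$, and the inductive step works uniformly because you allow $e$ of either sign. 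The price is that you implicitly need the identity $P_n(A) = A\,P_{n-1}(A) + (-1)^{n-1}A^{-3n+2}$ (and its $\sigma(e) = -1$ counterpart $P_n(A) = A^{-1}P_{n+1}(A) + (-1)^{n+1}A^{-3n-2}$) for \emph{all} integers $n$, not just $n > 0$; this is true and easy to check, but worth stating explicitly. Overall your route is slightly cleaner, trading the paper's pair-removal trick for a one-line extension of the $P_n$ recursion.
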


\begin{proof}
Once again, the proof is by induction on $n$, and we will prove only the nonnegative case. If $n = 0$ and $|C_i| > 2$, we can remove a pair $e_-, e_+ \in C_i$ with $\sigma(e_\pm) = \pm 1$ just as in the proof of Lemma \ref{thm:tech_1}. Thus, for $n = 0$, it suffices to consider the case where $|C_i| = 2$. If $|C_i| = 2$ then $C_i = \{e_-, e_+\}$ for some $e_-, e_+ \in E$, with $\sigma(e_\pm) = \pm1$ as before. This scenario is similar to the previous, but we observe that $e_\pm$ is coloop in $M \setminus \{e_\mp\}$ and $e_\pm$ is a loop in $M / \{e_\mp\}$. We then compute:
\begin{align*}
\tau[M] &= A \tau[M / \{e_+\}] + A^{-1} \tau[M \setminus \{e_+\}] \\
&= -A^{-2} \tau[(M / \{e_+\}) \setminus \{e_-\}] - A^2 \tau[(M \setminus \{e_+\}) \setminus \{e_-\}] \\
&= (-A^{-2} - A^2) \tau[M \setminus C_i],
\end{align*}
which is what we needed to show.

If $n = 1$, by removing pairs of opposite-signed elements, we can reduce to the case where $C_i$ consists of a single element $e$ with $\sigma(e) = 1$. In this case $\sigma(e)$ is a loop, so
$$
\tau[M] = -A^3 \tau[M \setminus C_i].
$$
Noting $P_1(A) = A^{-1}$, we see this is the desired result.

For general $n > 1$, we may choose $e \in C_i$ with $\sigma(e) = 1$. Since $e$ is neither a loop nor coloop,
\begin{equation}
\label{eq:tech_2}
\tau[M] = A\tau[M/\{e\}] + A^{-1} \tau[M \setminus \{e\}].
\end{equation}
Like the proof of Lemma \ref{thm:tech_1}, we can apply the induction hypothesis to the loop $C_i - \{e\}$ in $M/\{e\}$. Additionally, as before, we observe that every element of $C_i - \{e\}$ is a coloop in $M \setminus \{e\}$. With these facts in mind, (\ref{eq:tech_2}) becomes
\begin{align*}
\tau[M] &= A\big(A^{n - 1}(-A^{-2} - A^2) + P_{n - 1}(A)\big)\tau[M \setminus C_i] + A^{-1}(-A)^{-3(n - 1)}\tau[M \setminus C_i] \\
&= \big(A^n(-A^{-2} - A^2) + P_n(A)\big) \tau[M \setminus C_i].
\end{align*}
The identity $P_n(A) = A P_{n - 1}(A) + (-1)^{n - 1}A^{-3n + 2}$ is the same identity appearing in the proof of Lemma \ref{thm:tech_1}.
\end{proof}

\subsection{Proofs of Main Theorems}

In this section, we prove Theorems \ref{thm:main_one}, \ref{thm:well-defined}, and \ref{thm:bracket}. We recall the definition of $\mu$:

\begin{named_def}{\refdef{new_bracket}}
Given a symmetric, integer matrix $G = (g_{k\ell})$, define a polynomial $\mu[G] \in \Z[A^{\pm 1}]$ recursively as follows:
\begin{enumerate}[label=(\roman*)]
\item If $G$ is the empty matrix, $\mu[G] = 1$.
\item For any $i \neq j$,
\begin{align*}
\mu[G] = A^{-g_{ij}} \mu[G_{ij}'] + P_{-g_{ij}}(A) \mu[G''_{ij}].
\end{align*}
\item Let $g_{ii}$ be any diagonal entry of $G$ such that $g_{i\ell} = 0$ (and $g_{\ell i} = 0$) for all $\ell \neq i$. Then
$$
\mu[G] = (A^{g_{ii}}(-A^{-2} - A^2) + P_{g_{ii}}(A)) \mu[G_i'].
$$
\end{enumerate}
\end{named_def}

\begin{thm}
\label{thm:main_one}
Let $M = (E, \mathcal{C}, \sigma)$ be a signed, cographic matroid with Goeritz matrix $G$. Let $\iota_+$ (resp. $\iota_-$) be the number of coloops $e$ of $M$ such that $\sigma(e) = 1$ (resp. $\sigma(e) = -1$). Then
$$
\tau[M] = (-A)^{3(\iota_- - \iota_+)}\mu[G].
$$
\end{thm}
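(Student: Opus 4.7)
My plan is to prove Theorem~\ref{thm:main_one} by induction on $|E|$, matching the three clauses defining $\mu$ in Definition~\ref{def:new_bracket} against $\tau$-reductions supplied by Lemmas~\ref{thm:tech_1} and~\ref{thm:tech_2}. The base case $|E|=0$ is trivial: the matroid is empty, $G$ is empty, $\mu[G]=\tau[M]=1$, and $\iota_\pm=0$.

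For the inductive step, I first strip off any coloop $e$: since $e$ lies in no circuit, $e \notin \bigcup_k C_k$, so $\mathcal{B}$ remains a 2-basis of $M \setminus \{e\}$ with the same Goeritz matrix $G$, and the coloop clause of the $\tau$-recursion contributes exactly the factor of $(-A)^{\pm 3}$ needed to balance the change in $\iota_\pm$. Once $M$ is coloop-free and $|E|\geq 1$, the 2-basis $\mathcal{B}$ is nonempty, and I split into two cases. If some distinct $C_i, C_j \in \mathcal{B}$ have $E_{ij} := C_i \cap C_j \neq \varnothing$, I combine Lemma~\ref{thm:twist_change_1} (which identifies the Goeritz matrices of $M/E_{ij}$ and $M \setminus E_{ij}$ as $G'_{ij}$ and $G''_{ij}$) with Lemma~\ref{thm:tech_1}, and feed the inductive hypothesis into the two smaller matroids; the output matches clause (ii) of the $\mu$-recursion, noting $\sigma(E_{ij}) = -g_{ij}$. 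Otherwise every pair of 2-basis elements is disjoint, so $g_{i\ell}=0$ for every $\ell \neq i$, and Lemmas~\ref{thm:twist_change_2} and~\ref{thm:tech_2} match clause (iii) via $\sigma(C_i)=g_{ii}$. The ground set strictly decreases at every step, so the induction terminates.

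The main obstacle, and the crux of the whole argument, is showing that $\iota_+$ and $\iota_-$ are preserved under each matroid operation used, so the factor $(-A)^{3(\iota_- - \iota_+)}$ pulls through each recursion unchanged. The key observation is the identity
\[
\{\text{coloops of } M\} \;=\; E \setminus \bigcup_{k} C_k
\]
for a binary matroid with 2-basis $\mathcal{B}=\{C_k\}$: one direction is immediate, and for the other, any circuit $C$ satisfies $\bar{C}=\sum_{k \in S}\bar{C}_k$ in the circuit space, forcing each element of $C$ to lie in some $C_k$. The triple-intersection condition of Definition~\ref{def:scb} then forces $E_{ij}$ to be disjoint from $C_k$ for every $k \neq i,j$, so the 2-basis of $M/E_{ij}$ (resp.\ $M \setminus E_{ij}$) produced in Lemma~\ref{thm:twist_change_1} has union $U - E_{ij}$, where $U := \bigcup_k C_k$; since the ground set also shrinks by $E_{ij}$, the coloop set $E - U$ and the restricted signs $\sigma$ are unchanged. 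An analogous computation handles Lemma~\ref{thm:twist_change_2}. A direct verification shows that when some $C_i = \{e\}$ with $e$ a loop, Lemma~\ref{thm:tech_2} specializes to the loop clause of the $\tau$-recursion, so loops require no separate handling.
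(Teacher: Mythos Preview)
Your proof is correct and follows essentially the same strategy as the paper's: both arguments match clauses (ii) and (iii) of the $\mu$-recursion against Lemmas~\ref{thm:tech_1}/\ref{thm:twist_change_1} and Lemmas~\ref{thm:tech_2}/\ref{thm:twist_change_2} respectively, and both handle coloops via the $(-A)^{\pm 3}$ clause of $\tau$. The only organizational differences are that the paper inducts on the number of non-coloops (absorbing all coloops into the base case) while you induct on $|E|$ and strip coloops first, and that you make explicit the verification that $\iota_\pm$ is preserved under each operation via the identity $\{\text{coloops}\} = E \setminus \bigcup_k C_k$ --- a point the paper uses implicitly but does not spell out.
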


\begin{proof}
The proof is by induction on the number of elements of $E$ which are not coloops. If every element of $E$ is a coloop, $G$ is the empty matrix and $\mu[G]$ is defined to be $1$. Conversely, by the relation (ii) defining $\tau$, $\tau[M] = (-A)^{3(\iota_- - \iota_+)}$. Thus, the theorem holds.

For the general case, let $\mathcal{B}$ be the 2-basis of $M$ corresponding to $G$. Suppose that, for distinct $C_i, C_j \in \mathcal{B}$, $E_{ij} = C_i \cap C_j \neq \varnothing$. Then, by Lemma \ref{thm:tech_1} and the definition of $G$,
$$
\tau[M] = A^{-g_{ij}} \tau[M/E_{ij}] + P_{-g_{ij}}(A)\tau[M \setminus E_{ij}].
$$
By Lemma \ref{thm:twist_change_1}, $G'_{ij}$ is a Goeritz matrix for $M/E_{ij}$ and $G''_{ij}$ is a Goeritz matrix for $M \setminus E_{ij}$. From the induction hypothesis and the relation (ii) defining $\mu$, it follows that
$$
\tau[M] = (-A)^{3(\iota_- - \iota_+)}\big(A^{-g_{ij}} \mu[G'_{ij}] + P_{-g_{ij}}(A)\mu[G_{ij}'']\big) = (-A)^{3(\iota_- - \iota_+)}\mu[G]
$$
as desired.

Finally, if no such $C_i$ and $C_j$ exist, we may choose any cycle $C_i \in \mathcal{B}$. By Lemma \ref{thm:tech_2},
$$
\tau[M] = \big(A^{g_{ii}}(-A^{-2} - A^2) + P_{g_{ii}}(A)\big) \tau[M \setminus C_i].
$$
By Lemma \ref{thm:twist_change_2}, $G'_i$ is a Goeritz matrix for $M \setminus C_i$. The induction hypothesis and the relation (iii) defining $\mu$ imply
$$
\tau[M] = (-A)^{3(\iota_- - \iota_+)}\big(A^{g_{ii}}(-A^{-2} - A^2) + P_{g_{ii}}(A)\big) \mu[G'_i] = (-A)^{3(\iota_- - \iota_+)} \mu[G],
$$
completing the proof.
\end{proof}

We can now prove Theorem \ref{thm:well-defined}.
\begin{thm}
\label{thm:well-defined}
The polynomial $\mu$ is well-defined for any symmetric, integer matrix.
\end{thm}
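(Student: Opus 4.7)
The plan is to combine Corollary~\ref{thm:all_signed} with the lemmas of Sections~3 and 4.1 to reduce the well-definedness of $\mu$ to the well-definedness of Thistlethwaite's polynomial $\tau$. Given a symmetric, integer matrix $G$, Corollary~\ref{thm:all_signed} produces a signed cographic matroid $M$ with $2$-basis $\mathcal{B} = \{C_1, \dots, C_m\}$ whose Goeritz matrix is $G$. We will show that every valid recursive computation of $\mu[G]$ via the rules of Definition~\ref{def:new_bracket} yields the common value
$$
v(M) \;=\; (-A)^{-3(\iota_-(M) - \iota_+(M))}\,\tau[M],
$$
which is unambiguous since $\tau$ is a matroid invariant.

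We proceed by strong induction on a lexicographic pair: the size $m$ of $G$, and within that, the number of nonzero off-diagonal entries of $G$. The base case (empty $G$) follows from rule~(iii) of $\tau$ applied to the coloops of $M$. For the inductive step we consider which recursive rule is being applied. If relation~(ii) is applied to an off-diagonal entry $g_{ij}$ with $E_{ij} := C_i \cap C_j \neq \varnothing$, then Lemma~\ref{thm:twist_change_1} identifies $G'_{ij}$ and $G''_{ij}$ as Goeritz matrices of $M/E_{ij}$ and $M \setminus E_{ij}$; the induction hypothesis yields $\mu[G'_{ij}] = v(M/E_{ij})$ and $\mu[G''_{ij}] = v(M \setminus E_{ij})$; and Lemma~\ref{thm:tech_1} assembles these into $v(M)$. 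The case $E_{ij} = \varnothing$ (which forces $g_{ij} = 0$) is tautological since $G'_{ij} = G$ and $P_0 \equiv 0$. If relation~(iii) is applied to an isolated diagonal entry $g_{ii}$, then Lemma~\ref{thm:twist_change_2} gives $G'_i$ as the Goeritz matrix of $M \setminus C_i$, and Lemma~\ref{thm:tech_2} yields $v(M)$ once more. In every case the computed value of $\mu[G]$ equals $v(M)$ and is therefore independent of the entry chosen.

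The main obstacle is setting up the induction correctly: when relation~(ii) is applied nontrivially, the new matrix $G'_{ij}$ has the same dimension as $G$, only with its $(i,j)$-entry zeroed out. This is why the secondary induction on the number of nonzero off-diagonal entries is required, and is precisely the termination mechanism noted in the paragraph following Definition~\ref{def:new_bracket}. Once this ordering is fixed, the matroid-level identities of Section~4.1 force every choice in the recursion to land on the same value $v(M)$, and well-definedness follows; as a byproduct, $v(M)$ turns out to depend only on the matrix $G$ and not on the auxiliary matroid $M$ chosen from Corollary~\ref{thm:all_signed}.
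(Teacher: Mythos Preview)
Your overall strategy is the paper's: identify every recursive evaluation of $\mu[G]$ with $v(M) = (-A)^{-3(\iota_- - \iota_+)}\tau[M]$ via Corollary~\ref{thm:all_signed} and the lemmas of Sections~3 and~4.1. Your lexicographic induction on $(m,\ \text{number of nonzero off-diagonal entries})$ is a perfectly good substitute for the paper's induction on the number of non-coloop elements.

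There is, however, a genuine gap in your handling of rule~(iii). You invoke Lemmas~\ref{thm:twist_change_2} and~\ref{thm:tech_2}, both of which require the \emph{matroid} condition $C_i \cap C_j = \varnothing$ for all $j \neq i$. The hypothesis of rule~(iii) is only the \emph{matrix} condition $g_{i\ell}=0$ for all $\ell\neq i$, which is strictly weaker: one can have $E_{ij}=C_i\cap C_j\neq\varnothing$ with $\sigma(E_{ij})=0$ (for instance a cancelling $\pm$ pair of parallel edges). Nor can you rule this out by choosing $M$ carefully at the start, since your recursion descends to matroids of the form $M\setminus E_{ij}$, where a new off-diagonal zero $g_{ik}+g_{jk}=0$ need not correspond to an empty intersection. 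The fix is short: before applying those two lemmas, contract each nonempty $E_{ij}$ with $j\neq i$. By the $n=0$ case of Lemma~\ref{thm:tech_1} this leaves $\tau$ unchanged, no coloops are created or destroyed so $v$ is preserved, and by Lemma~\ref{thm:twist_change_1}(a) the Goeritz matrix is still $G$; after these contractions $C_i$ really is disjoint from the remaining basis cycles and your argument goes through. The paper's proof of Theorem~\ref{thm:main_one} sidesteps this by organizing its case split around the matroid, invoking rule~(iii) only once \emph{every} $E_{k\ell}$ is empty; your organization, which follows an arbitrary user-chosen sequence of rule applications, has to meet this case directly.
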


\begin{proof}
We have shown $\mu$ is well-defined for any matrix $G$ such that $G$ is a Goeritz matrix of a signed, cographic matroid. By Corollary \ref{thm:all_signed}, all symmetric, integer matrices satisfy this condition.
\end{proof}

Theorem \ref{thm:bracket} also follows easily from Theorem \ref{thm:main_one}.

\begin{thm}
\label{thm:bracket}
Let $L \subset S^3$ be a link with non-split diagram $D$ and checkerboard surface $S$, and let $G$ be a Goeritz matrix associated to $S$. Then
$$
\langle D \rangle = (-A)^{-3w_0(D,S)}\mu[G].
$$
\end{thm}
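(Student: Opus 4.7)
The plan is to derive Theorem \ref{thm:bracket} by applying Theorem \ref{thm:main_one} to the cycle matroid of the signed Tait graph of $D$ and $S$, and then combining with Thistlethwaite's identity \eqref{eq:mt}. Let $\Gamma$ be the signed Tait graph of $D$ associated to $S$. Because $\Gamma$ is planar, its cycle matroid $M(\Gamma) = B(\Gamma^*)$ is cographic, and by the discussion immediately following Definition \ref{def:goeritz} the classical Goeritz matrix $G$ is a Goeritz matrix of $M(\Gamma)$ in the matroid sense of Definition \ref{def:goeritz}.

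Applying Theorem \ref{thm:main_one} to $M = M(\Gamma)$ then gives
$$
\tau[M(\Gamma)] = (-A)^{3(\iota_- - \iota_+)}\mu[G],
$$
where $\iota_{\pm}$ counts coloops of $M(\Gamma)$ with sign $\pm 1$. Since $D$ is non-split, $\Gamma$ is connected, so the matroid polynomial $\tau[M(\Gamma)]$ coincides with Thistlethwaite's signed-graph polynomial $\tau[\Gamma]$ (the component-correction factor in the Remark of Section 4.1 is trivial), and by \eqref{eq:mt} we have $\tau[\Gamma] = \langle D \rangle$. Chaining these identifications yields
$$
\langle D \rangle = (-A)^{3(\iota_- - \iota_+)}\mu[G].
$$

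It remains to identify the exponent $3(\iota_- - \iota_+)$ with $-3w_0(D,S)$. The coloops of $M(\Gamma)$ are precisely the bridges of $\Gamma$, and a bridge of the Tait graph is exactly an $S$-nugatory crossing of $D$. Under the conventions of Figures \ref{fig:cc} and \ref{fig:crossing_sign}, the checkerboard sign $\sigma(c)$ of a nugatory crossing equals its writhe sign $\varepsilon(c)$; this is an orientation-independent, single-figure check, since both signs coincide at a nugatory crossing. Therefore
$$
\iota_+ - \iota_- = \sum_{c\ S\text{-nugatory}} \sigma(c) = \sum_{c\ S\text{-nugatory}} \varepsilon(c) = w_0(D,S),
$$
and $(-A)^{3(\iota_- - \iota_+)} = (-A)^{-3w_0(D,S)}$, completing the argument.

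I expect the main obstacle to be the small but delicate sign comparison in the final step, namely verifying that the checkerboard convention of Figure \ref{fig:cc} and the writhe convention of Figure \ref{fig:crossing_sign} assign the same sign to any $S$-nugatory crossing. Once this is pinned down, everything else is a transparent concatenation of Theorem \ref{thm:main_one}, the identification of the classical and matroid-theoretic Goeritz matrices for $M(\Gamma)$, and Thistlethwaite's equation \eqref{eq:mt}.
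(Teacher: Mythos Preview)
Your proposal is correct and follows essentially the same route as the paper: apply Theorem \ref{thm:main_one} to the cycle matroid of the Tait graph, invoke Thistlethwaite's identity \eqref{eq:mt}, and then verify $\iota_- - \iota_+ = -w_0(D,S)$ by identifying coloops of $M(\Gamma)$ with $S$-nugatory crossings. Your version is simply more explicit than the paper's (which compresses the sign check to ``one may check''), and you correctly flag the non-split hypothesis as needed so that the matroid and graph formulations of $\tau$ agree.
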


\begin{proof}
Let $\Gamma$ be the Tait graph of $S$. From (\ref{eq:mt}) and Theorem \ref{thm:main_one},
$$
\langle D \rangle = \tau[\Gamma] = (-A)^{3(\iota_- - \iota_+)}\mu[G].
$$
One may check that $-w_0(D,S) = \iota_- - \iota_+$.
\end{proof}

\section{Recovering the Jones Polynomial}

Here, we discuss what information is needed to recover the full Jones polynomial from a Goeritz matrix. Let $L$ be a link with diagram $D$, checkerboard surface $S$, and Goeritz matrix $G$---as Section 2.1 discusses, the Jones polynomial of $L$ can be obtained from $\langle D \rangle$ if we know the writhe of $D$. When $S$ is orientable, $w(D)$ can be read directly from $G$. Further, $G$ encodes the orientability of $S$.

\begin{prop}
\label{thm:o_from_g}
Let $S$ be a checkerboard surface with Goeritz matrix $G = (g_{k\ell})$. Then $S$ is orientable if and only if $g_{ii}$ is even for all $i$.
\end{prop}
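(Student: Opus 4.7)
The plan is to connect the parity of $g_{ii}$ to the combinatorics of the Tait graph $\Gamma$ of $S$, and then to connect bipartiteness of $\Gamma$ to orientability of $S$. I proceed in three steps.

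First, I would use the Tait-graph formula for the Goeritz matrix from Section 2.3:
\[
g_{ii} = \sum_{e \in C_i,\, e \text{ not a bridge of } \Gamma} \sigma(e),
\]
where $C_i = \partial X_i$ is the boundary of the $i$-th unshaded region. Since each $\sigma(e) = \pm 1$, the parity of $g_{ii}$ equals the parity of $|C_i \setminus \{\text{bridges}\}|$, so all $g_{ii}$ are even iff each such edge set has an even number of elements.

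Second, I would show that $S$ is orientable iff the Tait graph $\Gamma$ is bipartite. The checkerboard surface is built from one disk per vertex of $\Gamma$, connected by a half-twisted band at each edge of $\Gamma$ (one per crossing). A global orientation on $S$ is equivalent to a choice of normal at each shaded disk; the half-twist forces the normals on the two disks glued across any band to be opposite. A consistent choice therefore exists iff the vertices of $\Gamma$ admit a $2$-coloring, i.e., iff $\Gamma$ is bipartite (in particular $\Gamma$ has no loops, which otherwise would correspond to M\"obius bands in $S$).

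Third, I would combine these steps via the $\Z/2$-cycle space of $\Gamma$. The face-boundary subsets $C_0 \setminus \text{bridges}, \ldots, C_m \setminus \text{bridges}$ generate the cycle space of $\Gamma$ and satisfy the single relation that their sum in $\Z/2^E$ is zero (each non-bridge edge lies on exactly two face boundaries), so any $m$ of them form a basis. Cardinality mod $2$ is a $\Z/2$-linear functional on $\Z/2^E$, so every cycle in $\Gamma$ has even length iff every basis element does. By the first step this is equivalent to every $g_{ii}$ being even, and by the second step it is equivalent to $S$ being orientable.

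The main obstacle is the topological content of step two, namely the fact that every band of the checkerboard surface is half-twisted; this requires a careful check of the local model at both positive and negative crossings. A clean alternative is to invoke the identification of the Goeritz matrix with the Gordon--Litherland form (cited in the introduction): a class $x \in H_1(S,\Z)$ is two-sided in $S$ iff $G_L(x,x) \equiv 0 \pmod{2}$, and $S$ is orientable iff every class is two-sided, so the proposition reduces to the standard fact that a symmetric integer bilinear form is even iff its diagonal entries are.
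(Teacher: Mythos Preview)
Your proposal is correct. Your main argument factors through bipartiteness of the Tait graph $\Gamma$, which is a genuinely different route from the paper's. The paper works directly in $S$: it identifies each $C_i$ with a class in $H_1(S;\Z/2)$ and observes that $g_{ii}$ is the signed count of half-twists in a tubular neighborhood $N(C_i)\subset S$, so its parity detects whether $N(C_i)$ is an annulus or a M\"obius band; orientability of $S$ then follows because the $C_i$ form a basis of $H_1(S;\Z/2)$. Your alternative via the Gordon--Litherland form is essentially this same argument. The bipartiteness route is more combinatorial and makes the underlying graph theory explicit (and incidentally yields the clean characterization ``$S$ orientable $\Leftrightarrow$ $\Gamma$ bipartite''), while the paper's version is shorter and stays inside the surface; both ultimately rest on the same local fact that every band of the checkerboard surface carries exactly one half-twist.
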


\begin{proof}
Le $\Gamma$ be the Tait graph of $S$. Each diagonal element $g_{ii}$ of $G$ corresponds to a simple cycle $C_i$ of $\Gamma$, which represents a class $[C_i] \in H_1(S; \Z/2)$. The number $g_{ii}$ is the number of half-twists in a tubular neighborhood $N(C_i) \subset S$ of $C_i$, counted with sign. If $g_{ii}$ is even then $N(C_i)$ is an annulus, and if $g_{ii}$ is odd then $N(C_i)$ is a M\"obius strip. Since the set of all $C_i$ form a basis of $H_1(S; \Z/2)$, $S$ is orientable if and only if each $g_{ii}$ is even.
\end{proof}

An orientation on $S$ induces an orientation on the components of $D$ such that every crossing of $D$ is of type I, as shown in Figure \ref{fig:ct}. Further, for a type I crossing $c$, $\varepsilon(c) = -\sigma(c)$. This observation and a direct computation lead to the following formula.
\begin{lemma}
\label{o_writhe}
Let $D$ be a link diagram with orientation induced by an oriented checkerboard surface $S$, and let $G$ be a corresponding Goeritz matrix. Then
$$
w(D) - w_0(D,S) = -\sum_{i \leq j} g_{ij}.
$$
\end{lemma}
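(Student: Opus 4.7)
The plan is to combine the two ingredients already in hand. First, the preceding paragraph tells us that, for every crossing $c$ of $D$, the orientation induced by $S$ gives a type I configuration, so $\varepsilon(c) = -\sigma(c)$. Since $w_0(D,S)$ is the writhe of the $S$-nugatory crossings, this yields
$$
w(D) - w_0(D,S) = -\sum_{c \text{ non-}S\text{-nugatory}} \sigma(c).
$$
It therefore suffices to prove $\sum_{c \text{ non-}S\text{-nugatory}} \sigma(c) = \sum_{i \le j} g_{ij}$.

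To establish this I would translate to the Tait graph $\Gamma$ of $S$ and work from the Tait-graph formula for $G$ given in Section 2.3. A crossing is $S$-nugatory exactly when its two adjacent unshaded quadrants lie in the same region of $\R^2 \setminus D$, which in $\Gamma$ is the condition that the corresponding edge be a bridge (for a planar graph, an edge lies on the boundary of a single face, with multiplicity two, iff it is a bridge). Hence the non-$S$-nugatory crossings are in bijection with the non-bridge edges of $\Gamma$, and the left-hand side equals $\sum_{e \text{ non-bridge}} \sigma(e)$.

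The final step is a direct bookkeeping of the contribution of each edge $e$ to $\sum_{i \le j} g_{ij}$, where the index $0$ corresponds to the deleted region $X_0$. A non-bridge edge $e$ lying between $X_a$ and $X_b$ with $1 \le a < b \le m$ contributes $-\sigma(e)$ to $g_{ab}$ and $+\sigma(e)$ to each of $g_{aa}, g_{bb}$, a net of $+\sigma(e)$; a non-bridge edge between $X_0$ and $X_k$ (for $k \ge 1$) contributes $+\sigma(e)$ only via $g_{kk}$; and a bridge contributes $0$, as it is excluded from the diagonal by the ``not a bridge'' clause and never lies on an off-diagonal. Summing over edges gives $\sum_{i \le j} g_{ij} = \sum_{e \text{ non-bridge}} \sigma(e)$, which combined with the first identity proves the lemma.

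The only delicate point is the bookkeeping in this last step: one must verify that the double-counting of an interior non-bridge edge across the two diagonal entries $g_{aa}, g_{bb}$ combined with its single subtraction from $g_{ab}$ gives the same net $+\sigma(e)$ as the single diagonal contribution of an edge that touches the deleted region $X_0$. Once this symmetry is checked, the lemma follows immediately from the definitions and the identity $\varepsilon = -\sigma$ for type I crossings.
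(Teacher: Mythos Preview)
Your argument is correct and is precisely the ``direct computation'' the paper alludes to without spelling out: you use $\varepsilon(c)=-\sigma(c)$ for type~I crossings to reduce to showing $\sum_{c\text{ non-$S$-nugatory}}\sigma(c)=\sum_{i\le j}g_{ij}$, and then verify this by tracking each edge's contribution through the Tait-graph formula for $G$. The identification of $S$-nugatory crossings with bridges of $\Gamma$ and the case split on whether a non-bridge edge touches the deleted face $X_0$ are exactly the bookkeeping required, so there is nothing to add.
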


If $D$ is a knot diagram and $S$ is orientable, an orientation of $D$ is always compatible with an orientation of $S$. With this in mind, combining Proposition \ref{thm:o_from_g}, Lemma \ref{o_writhe}, and Theorem \ref{thm:bracket}, we have:
\begin{thm}
\label{thm:orientable_jones}
Let $K$ be a knot with checkerboard surface $S$ and associated Goeritz matrix $G$. If $S$ is orientable (equivalently, if every diagonal entry of $G$ is even), then
$$
J_K(t) = \big[ (-A)^{3(\sum_{i \leq j} g_{ij})} \mu[G] \big]_{t^{1/2} = A^{-2}}.
$$
\end{thm}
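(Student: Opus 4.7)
The plan is to deduce the theorem by direct substitution, chaining Proposition~\ref{thm:o_from_g}, Lemma~\ref{o_writhe}, and Theorem~\ref{thm:bracket} together exactly as signaled in the paragraph preceding the statement.

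First, Proposition~\ref{thm:o_from_g} translates the algebraic hypothesis ``every $g_{ii}$ is even'' into the geometric statement that $S$ is orientable. I would then fix an orientation on $S$. Since $K$ has a single component, the orientation of $\partial S$ is compatible with an orientation of the diagram $D$, as the text notes immediately before the theorem; with this compatible orientation every crossing of $D$ is of type~I, which is exactly the hypothesis needed to invoke Lemma~\ref{o_writhe}.

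Next, Lemma~\ref{o_writhe} supplies
$$w(D) = w_0(D,S) - \sum_{i\leq j}g_{ij},$$
while Theorem~\ref{thm:bracket} supplies
$$\langle D\rangle = (-A)^{-3w_0(D,S)}\mu[G].$$
Substituting both into the Kauffman-bracket definition of the Jones polynomial from equation~(\ref{eq:jones}),
$$J_K(t) = \big[(-A)^{-3w(D)}\langle D\rangle\big]_{t^{1/2}=A^{-2}},$$
the two contributions involving $w_0(D,S)$ collapse in the exponent on $(-A)$, leaving $(-A)^{3\sum_{i\leq j}g_{ij}}\mu[G]$, as required.

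There is no serious obstacle: the theorem is a formal consequence of the earlier results. The only step requiring genuine care is the exponent bookkeeping---in particular, verifying that the $w_0(D,S)$ terms cancel and that the sign conventions for $\varepsilon$, $\sigma$, and writhe underlying Lemma~\ref{o_writhe} are applied consistently---but this is mechanical, and it is already implicit in the passage leading up to the theorem.
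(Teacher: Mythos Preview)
Your proposal is exactly the paper's argument: the text immediately preceding the theorem says it follows by ``combining Proposition~\ref{thm:o_from_g}, Lemma~\ref{o_writhe}, and Theorem~\ref{thm:bracket},'' and you carry out precisely that combination. The one place to tighten is your claim that the two $w_0(D,S)$ contributions ``collapse'': direct substitution actually gives exponent $-6\,w_0(D,S)+3\sum_{i\le j} g_{ij}$, so strictly one should first remove the $S$-nugatory crossings (which alters neither $G$ nor $J_K$) to force $w_0=0$---a point the paper likewise leaves implicit.
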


The same result holds for any oriented link $L$, provided the orientation of $L$ is compatible with an orientation of $S$. We recall as well that every link admits an orientable checkerboard surface.

It is also possible to compute the full Jones polynomial from $G$ if $S$ is not orientable, provided we have more information about the \emph{Gordon-Litherland form} of $S$. The Gordon-Litherland form \cite{gl78} is a bilinear, symmetric pairing $\mathcal{G}_S : H_1(S) \times H_1(S) \to \Z$, defined as follows. Let $N(S) \subset S^3$ be the unit normal bundle of $S$, viewed as a subset of $S^3$; the projection $p : N(S) \to S$ is a degree two covering map. If $a, b \in H_1(S)$ are represented respectively by embedded, oriented multicurves $\alpha, \beta \subset S$, then we define
$$
\mathcal{G}_S(a, b) = \text{lk}(\alpha, p^{-1}(\beta)),
$$
where lk is the linking number. If $S$ is a checkerboard surface, $\Gamma \subset \R^2$ its Tait graph, and $\mathcal{B}$ a 2-basis for $\Gamma$, we may orient the cycles in $\mathcal{B}$ to be compatible with an orientation of the plane. The oriented cycles $\mathcal{B}$ form a basis for $H_1(S;\Z)$, and the Goeritz matrix of $\mathcal{B}$ is a representation of $\mathcal{G}_S$ in this basis.

Let $L$ be an oriented link with diagram $D$ and checkerboard surface $S$. The \emph{oriented Euler number} of $S$ is defined to be $e(S,L) = -\mathcal{G}_S([L], [L])$, where $[L] \in H_1(S)$ is the homology class of $L$. If II$(D)$ is the set of all type II crossings of $D$, then Gordon and Litherland show that
$$
e(S,L) = -2\sum_{c \in \text{II}(D)} \sigma(c).
$$
Another direct computation shows $w(D) - w_0(D,S) = e(S,L) - \sum_{i \leq j} g_{ij}$, and subsequently we can recover the Jones polynomial using $e(S,L)$.
\begin{thm}
\label{thm:full_jones}
Let $L$ be an oriented link with checkerboard surface $S$ and associated Goeritz matrix $G$. Then
$$
J_L(t) = \big[ (-A)^{-3(e(S,L) - \sum_{i \leq j} g_{ij})} \mu[G] \big]_{t^{1/2} = A^{-2}}.
$$
\end{thm}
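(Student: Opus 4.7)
The proof reduces to a short chain of substitutions. The plan is to begin with the Jones normalization $J_L(t) = \big[ (-A)^{-3w(D)} \langle D \rangle \big]_{t^{1/2}=A^{-2}}$ from equation (\ref{eq:jones}), replace $\langle D \rangle$ by its expression from Theorem \ref{thm:bracket}, and then apply the crossing-counting identity $w(D) - w_0(D,S) = e(S,L) - \sum_{i\leq j} g_{ij}$ (asserted immediately before the theorem) to rewrite the exponent of $(-A)$ in the stated form.

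The only nontrivial step is the writhe identity itself, which I would establish by partitioning the crossings of $D$. The $S$-nugatory crossings contribute exactly $w_0(D,S)$ to $w(D)$ by definition, so it suffices to prove that the writhe of the remaining non-$S$-nugatory crossings equals $e(S,L) - \sum_{i\leq j} g_{ij}$. Each such crossing is of type I or II in the sense of Figure \ref{fig:ct}; for type I crossings the relation $\varepsilon(c) = -\sigma(c)$ recorded in the paragraph before Lemma \ref{o_writhe} holds, and for type II crossings the Gordon--Litherland formula $e(S,L) = -2\sum_{c\in \mathrm{II}(D)} \sigma(c)$ stated above the theorem handles their contribution. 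On the Goeritz side, expanding $\sum_{i\leq j} g_{ij}$ via equation (\ref{eq:goeritz}) gives an off-diagonal part that sums $-\sigma(c)$ over crossings incident to two distinct unshaded regions in $\{X_1,\dots,X_m\}$, together with a diagonal part, which by the unreduced relation $\tilde g_{ii} = -\sum_{k\neq i} \tilde g_{ik}$ absorbs the checkerboard signs of crossings incident to the deleted region $X_0$. Matching these contributions crossing-by-crossing yields the identity.

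The main obstacle is the sign bookkeeping in this last step: one must track how each non-$S$-nugatory crossing contributes to $w(D)$, to $e(S,L)$, and to the entries of $G$, and verify that the pieces cancel correctly. The diagonal entries $g_{ii}$ are the subtle ones, since they are not directly a sum over crossings adjacent to $X_i$ but are defined through the unreduced Goeritz relation. Once the identity is in hand, the final substitution is mechanical and produces exactly the formula claimed.
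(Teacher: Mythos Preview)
Your proposal is correct and takes essentially the same approach as the paper: the paper likewise asserts the identity $w(D)-w_0(D,S)=e(S,L)-\sum_{i\le j} g_{ij}$ as a ``direct computation'' and then derives the theorem immediately from it together with Theorem~\ref{thm:bracket} and the Jones normalization~(\ref{eq:jones}). Your crossing-by-crossing sketch of that identity actually gives more detail than the paper provides.
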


As stated in the introduction, Theorem \ref{thm:full_jones} says the Jones polynomial of a link $L$ can be explicitly computed from the Gordon-Litherland form of certain spanning surfaces of $L$, provided we choose the right basis.

\section{Invariants of Links in Thickened Surfaces}

\subsection{Polynomial Invariants}

When $G$ is a Goeritz matrix of a signed planar graph $\Gamma$, which is a Tait graph of a non-split link diagram $D$, we've seen that
$$
(-A)^{-3w_0(D,S)}\mu[G] = \tau[\Gamma] = \langle D \rangle.
$$
What happens when $\Gamma$ is not planar? The natural knot-theoretic objects to consider are links in thickened surfaces.

Every graph $\Gamma$ admits an embedding into some closed, orientable surface $\Sigma$. Given such an embedding, we can produce a link diagram $D \subset \Sigma$ using the \emph{medial construction}. To perform the medial construction, we take a regular neighborhood $S$ of $\Gamma$, which we may view as a subset of $\Sigma \times \{1/2\}$ in $\Sigma \times I$. We then insert a half-twist in $S$ around each edge $e$ of $\Gamma$, twisting in the direction indicated by the sign of $e$. Taking the boundary of $S$ and projecting down to $\Sigma \times \{0\}$ gives a diagram $D$ in $\Sigma$ whose Tait graph is $\Gamma$. Considering this construction, the following definitions and theorem give knot-theoretic significance to the polynomial $\tau$ for all signed graphs.

\begin{defn}
	Let $\Sigma$ be a closed, orientable surface, and $D \subset \Sigma$ a checkerboard-colorable, oriented link diagram. Let $S$ be a checkerboard surface of $D$ with associated Tait graph $\Gamma$. Viewing $\Gamma$ as a $1$-complex sitting in $S \subset \Sigma \times I$, let $dc(\Gamma)$ be the dimension of the cokernel of the inclusion-induced map $\iota : H_1(\Gamma) \to H_1(S)$. That is,
	$$
	dc(\Gamma) = \dim(H_1(S)/\iota(H_1(\Gamma)))
	$$
\end{defn}

The constant $dc(\Gamma)$ can be defined more combinatorially as follows: with notation as in the definition, let $\Gamma' \subset \Sigma$ be the other Tait graph of $D$, and define the {\em faces} of $\Gamma'$ be the components of $\Sigma - \Gamma'$. Then $dc(\Gamma)$ is the first Betti number of the union of the faces of $\Gamma'$. In particular, if $\Gamma'$ is {\em cellularly embedded} in $\Sigma$---meaning each face is a disk---then $dc(\Gamma) = 0$.

\begin{defn}
\label{def:nu}
Let $\Sigma$ be a closed, orientable surface, and $D \subset \Sigma$ a checkerboard-colorable, oriented link diagram. Let $\Gamma$ be an associated Tait graph, and define a polynomial $\nu_{D, \Gamma}$, in one variable $t$, by
$$
\nu_{D, \Gamma}(t) = [(-A^2 - A^{-2})^{dc(\Gamma)}(-A)^{-3w(D)} \tau[\Gamma]]_{t^{1/2} = A^{-2}}.
$$
\end{defn}

It can be proven by induction on the number of edges of $\Gamma$ that $\nu_{D, \Gamma}$ is an element of $\Z[t^{\pm1/2}]$. Equivalently, $(-A)^{-3w(D)} \tau[\Gamma]$ contains only even powers of $A$.

\begin{thm}
\label{thm:nu}
Let $\Sigma$ be a closed, orientable surface, $D \subset \Sigma$ a checkerboard-colorable link diagram, and $L \subset \Sigma \times I$ the associated link. Let $\Gamma$ and $\Gamma'$ be the Tait graphs associated to the two checkerboard surfaces of $D$. Then the set
$$
V = \{\nu_{D, \Gamma}(t), \nu_{D, \Gamma'}(t)\}
$$
is an isotopy invariant of $L$.
\end{thm}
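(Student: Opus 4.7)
The plan is to check that the unordered pair $V$ is preserved under each of the three Reidemeister moves applied to $D\subset\Sigma$, since those moves together with ambient isotopies of $\Sigma$ generate the link isotopy relation in $\Sigma\times I$. Ambient isotopies of $\Sigma$ preserve the combinatorial type of $D$, both Tait graphs, and the writhe, so they are automatic. After an isotopy one can track each of the two checkerboard colorings consistently, so it suffices to verify that $\nu_{D,\Gamma}$ is individually invariant under each Reidemeister move (and symmetrically for $\nu_{D,\Gamma'}$), which implies invariance of $V$.

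For R1, a single kink introduces one new crossing that becomes a new bridge in one Tait graph and a new loop in the other, with opposite signs in the two graphs. Rule (iii) in the definition of $\tau$ then produces a factor of $(-A)^{\pm 3}$ in both $\tau[\Gamma]$ and $\tau[\Gamma']$, which is exactly cancelled by the unit shift in writhe entering $(-A)^{-3w(D)}$. For R2 the writhe is unchanged, and the move adds two opposite-sign edges to each Tait graph. Using the non-split hypothesis, which ensures the Tait graph stays connected so that the two endpoints of the new edges are already joined by a path, I would apply rule (ii) to one of the new edges, then rule (iii) to the loop or coloop that arises upon contraction, and rule (ii) once more to the remaining edge upon deletion; a short computation collapses everything to $\tau$ of the original matroid. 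R3 similarly preserves the writhe and induces a specific local reconfiguration of three edges in each Tait graph; invariance of $\tau$ follows from a direct case analysis applying rule (ii) to the edges involved, parallel to the planar verification of R3 invariance for the Kauffman bracket via Thistlethwaite's formula.

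The hard part will be the careful case analysis for R2 and R3. One must translate each Reidemeister move accurately into graph-theoretic operations on both Tait graphs, accounting for the sub-cases arising from strand orientations and from which checkerboard coloring is being tracked (e.g.\ whether the new bigon region becomes a new pendant vertex or simply modifies the adjacency of existing vertices), and then verify the $\tau$ identity in each sub-case using the recursive rules. The non-split hypothesis is essential for R2, since without it the Tait graph can be disconnected and the matroid-level $\tau$ picks up an extra factor of $-A^{-2}-A^2$ per newly joined component. A conceptually cleaner alternative, if workable, would be to lift Thistlethwaite's state-sum identification of $\tau[\Gamma]$ with the Kauffman bracket to diagrams on $\Sigma$ and inherit Reidemeister invariance directly from the bracket's skein relations, though this requires tracking how the surface topology enters the number of state circles.
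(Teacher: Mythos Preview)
Your approach is correct and essentially the same as the paper's: reduce to Reidemeister moves, track the induced changes on the two Tait graphs, and verify that each $\nu_{D,\Gamma}$ is individually preserved (so the unordered pair is preserved, allowing for the labeling swap the paper also notes). The paper's proof is in fact terser than yours---it simply records that the graph effects of each move are easy to determine, cites Kauffman's signed-graph paper for the relevant computations, and says one verifies directly; your outline of the R1/R2/R3 checks is a more explicit version of the same argument.
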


\begin{proof}
The proof follows along the lines of the concluding remarks of \cite{k89}. Let $D_0 \subset \Sigma$ be another diagram of $L$, with Tait graphs $\Gamma_0, \Gamma_0'$. The diagram $D$ may transformed into $D_0$ by a set of diagram isotopies and local Reidemeister moves, shown in Figure \ref{fig:rm}. The results of these moves on the graphs $\Gamma$ and $\Gamma'$ are easy to determine---for example, Figure \ref{fig:rm_graph} shows the possible effects on the Tait graph of a Reidemeister II move. Figure \ref{fig:rmd} is the only case which changes the constant $dc$. Performing this set of moves will either transform $\Gamma$ into $\Gamma_0$ and $\Gamma'$ into $\Gamma'_0$, or will transform $\Gamma$ into $\Gamma'_0$ and $\Gamma'$ into $\Gamma_0$. Assume without loss of generality that the former holds; to prove the theorem, we check that $\nu_{D,\Gamma} = \nu_{D_0,\Gamma_0}$ and $\nu_{D,\Gamma'} = \nu_{D_0,\Gamma_0'}$. This is done by verifying directly that $\nu_{D,\Gamma}$ and $\nu_{D,\Gamma'}$ are unchanged by each Reidemeister move.
\end{proof}

\begin{figure}[H]
	\centering
	\includegraphics[height=2.5cm]{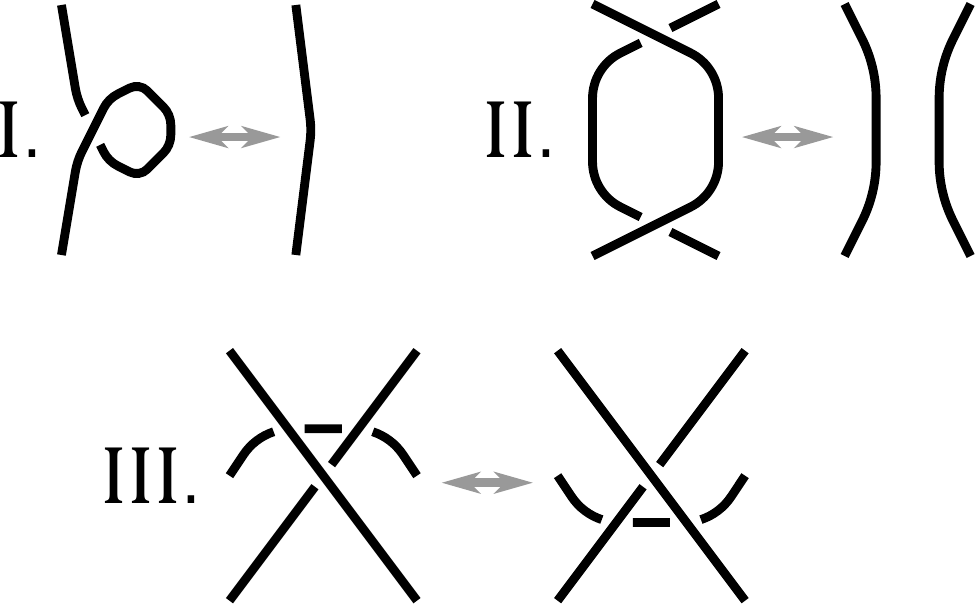}
	\caption{Reidemeister moves}
	\label{fig:rm}
\end{figure}

\begin{figure}[H]
	\centering
	\subcaptionbox{\label{fig:rma}}{
		\includegraphics[height=3cm]{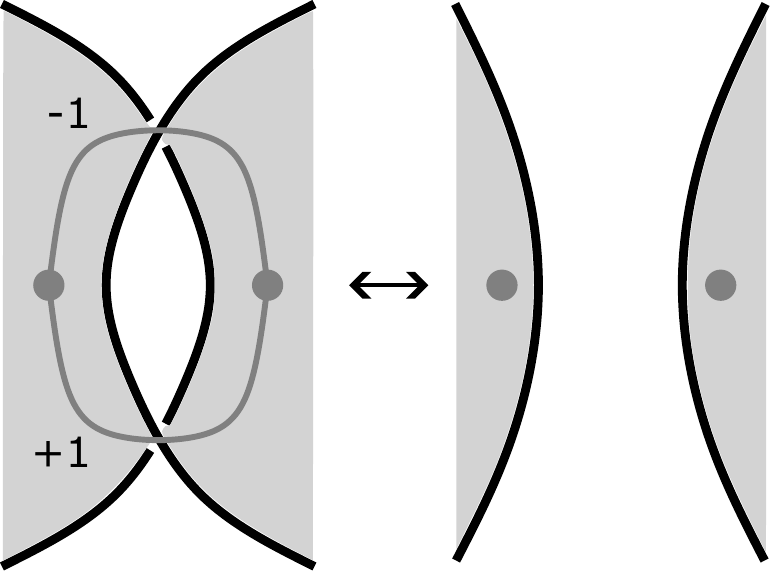}
	}
	\hspace{1cm}
	\subcaptionbox{\label{fig:rmb}}{
		\includegraphics[height=3cm]{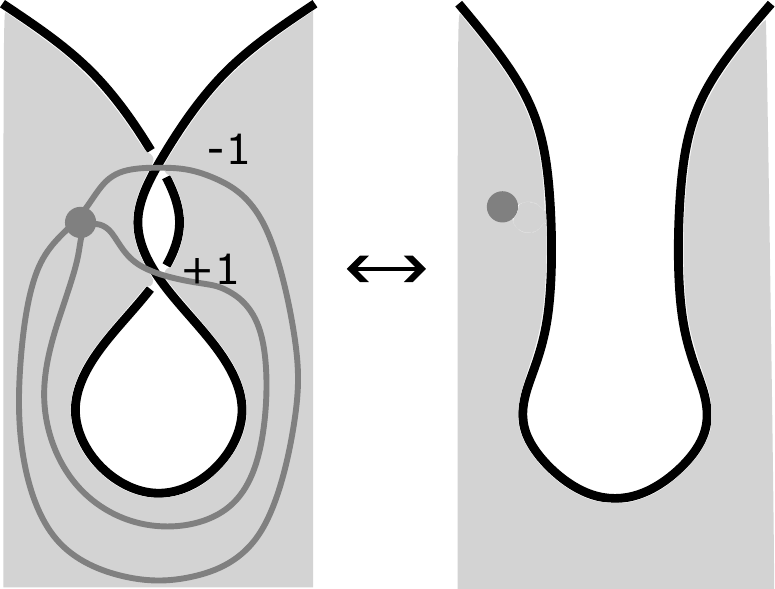}
	}
	\hspace{1cm}
	\subcaptionbox{\label{fig:rmc}}{
		\includegraphics[height=3cm]{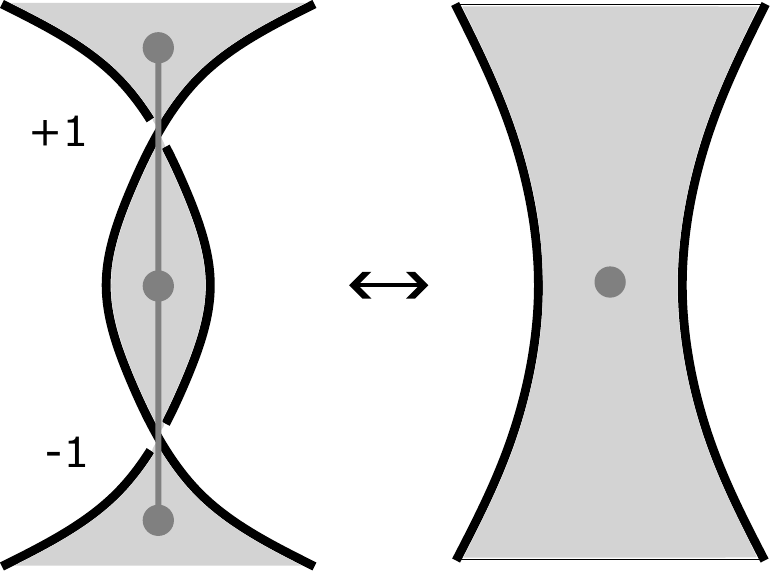}
	}
	\hspace{1cm}
	\subcaptionbox{\label{fig:rmd}}{
		\includegraphics[height=3cm]{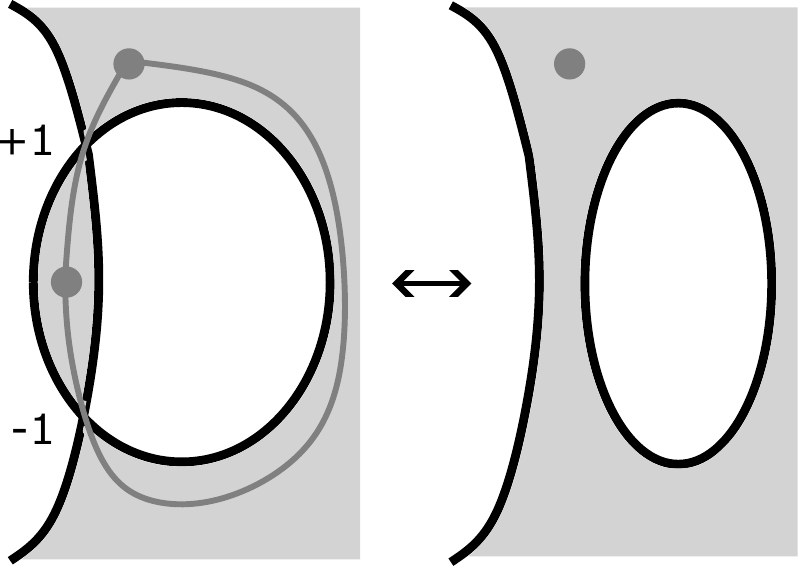}
	}
	\caption{Possible effects of an RII move}
	\label{fig:rm_graph}
\end{figure}

If $D$ is a classical link diagram in $\R^2$ (or $S^2$), then
$$
\nu_{D, \Gamma}(t) = \nu_{D, \Gamma'}(t) = J_L(t).
$$
For links in thickened surfaces of positive genus, however, the three polynomials are generally distinct. Consider, for example, the knot $K \subset T^2 \times I$ with diagram $D \subset T^2$ shown in Figure \ref{fig:vk}. We have:
\begin{align*}
\nu_{D, \Gamma}(t) &= 1 \\ 
\nu_{D, \Gamma'}(t) &= -t^{-1/2} - t^{-5/2} \\
J_K(t) &= t^{-1} + t^{-3} - t^{-4}.
\end{align*}

\begin{figure}[H]
\centering
\includegraphics[height=3.5cm]{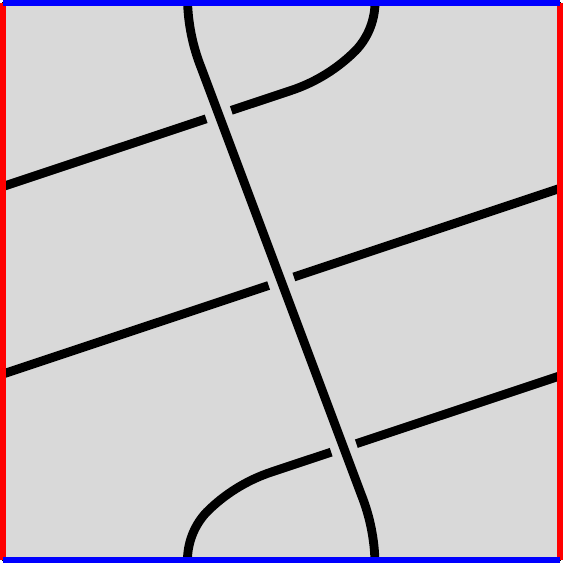}
\caption{A knot in the thickened torus}
\label{fig:vk}
\end{figure}

\begin{rmk}
\label{rmk:krushkal}
Suppose $D$ is a connected, alternating diagram, and $\Gamma$ and $\Gamma'$ are cellularly embedded in $\Sigma$. In this case, all three polynomials $\nu_{D, \Gamma}$, $\nu_{D, \Gamma'}$, and $J_L$ can be recovered from the \emph{Krushkal polynomial} of $\Gamma$. The Krushkal polynomial $P_\Gamma = P_{\Gamma, \Sigma} (X,Y,A,B)$ is a four-variable polynomial invariant of graphs embedded in closed, orientable surfaces \cite{k11}. $P$ satisfies a duality property: since $\Gamma$ and $\Gamma'$ are cellular and dual with respect to their embeddings in $\Sigma$,
$$
P_\Gamma(X,Y,A,B) = P_{\Gamma'}(Y,X,B,A).
$$
The Krushkal polynomial also specializes to the two-variable Tutte polynomial $T_\Gamma(X,Y)$. When $D$ is alternating, $\tau[\Gamma]$ can be recovered from $T_\Gamma$ \cite{t87} and $J_L$ can be recovered from $P$. Additionally, Krushkal notes that one can define a \emph{signed} version of $P$, analogous to Kauffman's generalization of $\tau$ \cite{k89}. From such a polynomial it may be possible to recover $J_L$, $\nu_{D, \Gamma}$, and $\nu_{D, \Gamma'}$ even when $D$ is not alternating. We emphasize, however, that $P$ itself is not a link (or matroid) invariant.
\end{rmk}

In the next section, we discuss how the polynomial $\nu$ relates to the determinants of links in thickened surfaces.

\subsection{Goeritz Matrices and Determinants of Links in Surfaces}

Throughout this section, let $\Sigma$ be a closed, orientable surface, and $D \subset \Sigma$ a checkerboard-colorable diagram of a non-split link $L \subset \Sigma \times I$. Let $S$ and $S'$ be the two checkerboard surfaces of $D$, and $\Gamma$ and $\Gamma'$ their respective Tait graphs.

Let $G$ and $G'$ be Goeritz matrices of $S$ and $S'$ respectively, according to Definition \ref{def:ill}. If $\Sigma = S^2$, $G$ is also a Goeritz matrix of $M(\Gamma)$ by our Definition \ref{def:goeritz}. Further, since $M(\Gamma) = B(\Gamma')$, $G$ is trivially a Goeritz matrix of $B(\Gamma')$.

If $\Sigma$ has positive genus, the situation is more complicated. Since the embedding $\Gamma \hookrightarrow \Sigma$ is not planar, it is no longer true in general that $M(\Gamma) = B(\Gamma')$. Additionally, $G$ is not necessarily a Goeritz matrix for $M(\Gamma)$ by Definition \ref{def:goeritz}. Indeed, if $\Gamma$ is not a planar graph, then $M(\Gamma)$ is not cographic and no Goeritz matrix of $M(\Gamma)$ can be defined. However, it remains true that $G$ is a Goeritz matrix for the bond matroid $B(\Gamma')$.

\begin{prop}
\label{thm:goeritz_eq}
If $G$ is a Goeritz matrix for $S$ by Definition \ref{def:ill}, then $G$ is a Goeritz matrix for $B(\Gamma')$ by Definition \ref{def:goeritz}. Dually, if $G$ is a Goeritz matrix for $S'$, then $G$ is a Goeritz matrix for $B(\Gamma)$.
\end{prop}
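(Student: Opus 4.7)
My plan is to match the two definitions entry-by-entry via the natural bijection between the unshaded regions $X_0, \dots, X_m$ of $\Sigma - S$ used in Definition \ref{def:ill} and the vertices of $\Gamma'$, and then to invoke the canonical 2-basis of $B(\Gamma')$ described at the end of Section 2.4. Since $\Gamma'$ is the Tait graph of the \emph{other} checkerboard surface $S'$, its vertices $v_0, \dots, v_m$ correspond bijectively to the unshaded regions $X_0, \dots, X_m$, and each crossing $c$ of $D$ corresponds to a unique edge $e_c \in E(\Gamma')$ whose endpoints are the two regions adjacent to $c$ (with $e_c$ a loop exactly when $c$ is $S'$-nugatory). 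I then take $\mathcal{A} = \{A_1, \dots, A_m\}$, where $A_k$ is the set of non-loop edges of $\Gamma'$ incident to $v_k$, and I will show that the Goeritz matrix produced from $\mathcal{A}$ via Definition \ref{def:goeritz} agrees with the reduced matrix produced from Definition \ref{def:ill}.

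The first substantive step is sign bookkeeping, which I expect to be the only subtlety in the argument. By the paper's convention the coloring on $B(\Gamma')$ is $-\sigma_{\Gamma'}$, and because dual Tait edges carry opposite signs one has $-\sigma_{\Gamma'}(e_c) = \sigma_\Gamma(e_c) = \sigma(c)$, so the matroid-coloring of $e_c$ equals the checkerboard sign $\sigma(c)$ from Figure \ref{fig:cc}. Once this identification is in hand, matching off-diagonal entries is immediate: for distinct $i,j \in \{1,\dots,m\}$, the set $A_i \cap A_j$ corresponds bijectively to the set of crossings adjacent to both $X_i$ and $X_j$, so both definitions produce $-\sum_{c \text{ adj.\ to } X_i, X_j} \sigma(c)$.

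For the diagonal, Definition \ref{def:ill} computes $\tilde{g}_{ii} = -\sum_{k \neq i} \tilde{g}_{ik}$ (with $k \in \{0,\dots,m\} \setminus \{i\}$), while Definition \ref{def:goeritz} gives directly $\sum_{e \in A_i} (-\sigma_{\Gamma'}(e))$. I close the proof by observing that every non-loop edge of $\Gamma'$ incident to $v_i$ lies in $A_i \cap A_k$ for exactly one $k \neq i$, namely the index of its other endpoint, so $A_i$ decomposes as the disjoint union
\[
A_i = \bigsqcup_{k \neq i} (A_i \cap A_k).
\]
Summing the already-matched off-diagonal contributions over this partition yields precisely the diagonal of Definition \ref{def:goeritz}, and deleting the $0$th row and column on both sides finishes the identification of the reduced matrices. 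The dual statement follows by interchanging the roles of $S$ and $S'$ (equivalently, of $\Gamma$ and $\Gamma'$). The main obstacle is nothing more than keeping the two layers of sign-flipping (dual Tait graph versus dual matroid) consistent; once those cancel, the combinatorial matching is essentially forced.
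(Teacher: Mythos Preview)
Your proposal is correct and takes essentially the same approach as the paper: both identify the unshaded regions $X_1,\dots,X_m$ with the canonical 2-basis $\{A_1,\dots,A_m\}$ of $B(\Gamma')$ described at the end of Section~2.4, and both observe that the two sign flips (dual Tait graph, dual matroid) cancel so that each point of $B(\Gamma')$ carries the checkerboard sign $\sigma(c)$; you have simply written out the entry-by-entry comparison that the paper leaves implicit. One inconsequential slip: a loop $e_c$ in $\Gamma'$ occurs when the two \emph{unshaded} regions at $c$ coincide, which is the condition for $c$ to be $S$-nugatory rather than $S'$-nugatory, but since this parenthetical is never used in your argument the proof is unaffected.
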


\begin{proof}
This is clear from the definition of each kind of Goeritz matrix, noting that the faces $X_1, \dots, X_m$ of $\Sigma - S$ in the construction of $G$ in Definition \ref{def:ill} correspond exactly to elements of a 2-basis of $B(\Gamma')$. Our sign convention for dual matroids ensures each point of $B(\Gamma')$ has the same sign as the edge of $\Gamma$ it intersects.
\end{proof}

Proposition \ref{thm:goeritz_eq} has two applications. First, it shows that every symmetric, integer matrix is the Goeritz matrix of some link in a thickened surface. This result extends \cite[Thm.~3.5]{bck21}.

\begin{thm}
\label{thm:every_matrix}
Every symmetric, integer matrix $G$ is a Goeritz matrix of a checkerboard-colorable link diagram in a thickened surface.
\end{thm}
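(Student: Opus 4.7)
The plan is to combine Corollary \ref{thm:all_signed}, which realizes any symmetric integer matrix as a Goeritz matrix of a cographic matroid, with the medial construction, which realizes any signed graph as a Tait graph of a link diagram in some thickened surface. Proposition \ref{thm:goeritz_eq} is the bridge that converts the matroid-theoretic Goeritz matrix of Definition \ref{def:goeritz} into the diagrammatic one of Definition \ref{def:ill}.

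Concretely, given a symmetric integer matrix $G$, I would first invoke Corollary \ref{thm:all_signed} to produce a signed graph $\Gamma^*$, together with a distinguished vertex $v_0 \in V(\Gamma^*)$, such that $G$ is the Goeritz matrix of $B(\Gamma^*)$ in the sense of Definition \ref{def:goeritz}, taken with respect to the star 2-basis based at $v_0$. Next, I embed $\Gamma^*$ in some closed, orientable surface $\Sigma$; such an embedding exists because every finite graph has a finite orientable genus.

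Then perform the medial construction on this embedding: take a regular neighborhood $S^*$ of $\Gamma^*$ inside $\Sigma \times \{1/2\} \subset \Sigma \times I$, insert a half-twist at each edge in the direction dictated by its sign, and take the boundary to obtain a checkerboard-colorable link diagram $D \subset \Sigma$. By construction $S^*$ is one of the two checkerboard surfaces of $D$, with Tait graph $\Gamma^*$; let $S$ denote the other checkerboard surface. The regions of $\Sigma - S$ are precisely the components of $S^*$, and therefore correspond bijectively to the vertices of $\Gamma^*$; label them $X_0, X_1, \ldots, X_m$ so that $X_0$ corresponds to $v_0$.

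With this labeling, Proposition \ref{thm:goeritz_eq} identifies the Goeritz matrix of $S$ in the sense of Definition \ref{def:ill} (obtained by deleting the row and column indexed by $X_0$) with the Goeritz matrix of $B(\Gamma^*)$ at the star 2-basis based at $v_0$, which is $G$ by step one. The main obstacle is not a deep step but careful bookkeeping: one must track the duality between the two Tait graphs of $D$, verify that the star 2-basis produced combinatorially in Corollary \ref{thm:all_signed} is exactly the one realized geometrically by the regions of $\Sigma - S$, and confirm compatibility of the opposite sign convention on dual matroids introduced in Section 2.3. Each of these checks is essentially contained in or immediate from the proof of Proposition \ref{thm:goeritz_eq}, which is why the theorem reduces to a short verification once the machinery of Sections 3 and 6.2 is in place.
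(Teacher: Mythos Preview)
Your proposal is correct and follows essentially the same route as the paper's own proof: take the signed graph $\Gamma$ from Corollary~\ref{thm:all_signed}, embed it in a closed orientable surface, apply the medial construction, and then invoke Proposition~\ref{thm:goeritz_eq} to match the matroid-theoretic Goeritz matrix with the diagrammatic one. The paper's proof is terser and leaves the bookkeeping you spell out (which checkerboard surface to use, why the regions of $\Sigma - S$ match the star $2$-basis at $v_0$) implicit, but the argument is the same.
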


\begin{proof}
Let $\Gamma$ be the signed graph constructed in the proof of Corollary \ref{thm:all_signed}, whose bond matroid has $G$ as a Goeritz matrix, and let $\Sigma$ be a closed, orientable surface into which $\Gamma$ embeds. By performing the medial construction on $\Gamma$, we construct a link diagram $D$ which has $G$ as a Goeritz matrix.
\end{proof}

The second application of Proposition \ref{thm:goeritz_eq} relates the polynomials $\nu_{D, \Gamma}$ and $\nu_{D, \Gamma'}$ to the determinants of $L$, discussed in Section 2.2.

\begin{thm}
\label{thm:dets}
Let $\Gamma$, $\Gamma'$, $G$, $G'$, and $L$ be as defined above, with $L$ oriented. Then
\begin{align*}
|\nu_{D, \Gamma}(-1)| = |\det(G')| \\
|\nu_{D, \Gamma'}(-1))| = |\det(G)|.
\end{align*}
In particular, $\det{L} = \{|\nu_{D, \Gamma}(-1)|, |\nu_{D, \Gamma'}(-1)|\}$.
\end{thm}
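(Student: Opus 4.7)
The plan is to reduce Theorem \ref{thm:dets} to the following algebraic lemma: for every symmetric integer matrix $G$, the complex number $\mu[G](e^{i\pi/4})$ has modulus $|\det(G)|$. Note that the evaluation $t = -1$ in the definition of $\nu$ corresponds to $A^{-2} = \pm i$, i.e.\ to $A^4 = -1$ with $|A| = 1$; choosing $A = e^{i\pi/4}$ is a convenient normalization. Granting the lemma, I argue as follows. By Proposition \ref{thm:goeritz_eq}, $G$ is a Goeritz matrix of $B(\Gamma')$; by Remark \ref{rmk:duality}, $\tau[\Gamma'] = \tau[M(\Gamma')] = \tau[B(\Gamma')]$; and by Theorem \ref{thm:main_one}, $\tau[B(\Gamma')] = (-A)^{3(\iota_- - \iota_+)}\mu[G]$ for the appropriate coloop counts of $B(\Gamma')$. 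Substituting into Definition \ref{def:nu} and specializing at $t = -1$, the prefactor $(-A)^{3(\iota_- - \iota_+ - w(D))}$ has modulus $1$, so $|\nu_{D,\Gamma'}(-1)| = |\mu[G](e^{i\pi/4})| = |\det(G)|$. The identity for $\nu_{D,\Gamma}$ follows by the symmetric argument applied to $G'$ and $B(\Gamma)$, and the description of $\det(L)$ is then immediate from the Im--Lee--Lee formula $\det(L) = \{|\det(G)|, |\det(G')|\}$ of \cite{ill10, bck21}.

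The lemma itself is proved by induction on the size of $G$, with two key observations at $A^4 = -1$: the loop value $(-A^{-2}-A^2)$ vanishes, simplifying relation (iii) of $\mu$ to $\mu[G] = P_{g_{ii}}(A)\mu[G'_i]$; and a direct computation of the alternating sum defining $P_n$ shows that $P_n(A)|_{A^4 = -1}$ is a monomial in $A$ of modulus $|n|$. Combined with the cofactor expansion $\det(G) = g_{ii}\det(G'_i)$, valid when $G$ has no nonzero off-diagonal entries in its $i$th row and column, the inductive step via (iii) is immediate. For the inductive step via (ii) we also need the rank-one update identity
\begin{equation*}
\det(G) = \det(G'_{ij}) - g_{ij}\det(G''_{ij}),
\end{equation*}
which follows from the matrix determinant lemma applied to the symmetric rank-one update $G'_{ij} = G + g_{ij}vv^T$ with $v = e_i - e_j$, combined with the bordered-determinant identity $v^T\,\text{adj}(G)\,v = \det(G''_{ij})$.

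The main obstacle is that the raw inductive hypothesis only controls moduli, while relation (ii) of $\mu$ expresses $\mu[G]$ as a sum of two terms whose phases must align with the signs of the rank-one identity in order for the modulus to match that of $\det(G)$; the triangle inequality alone yields only an upper bound. To overcome this, I would strengthen the inductive hypothesis to $\mu[G](e^{i\pi/4}) = u(G)\det(G)$ for some specific unit $u(G) \in \{\pm A^k : k \in \Z\}$, and track $u$ through the recursion. The compatibility condition needed for (ii) is an explicit identity relating $u(G'_{ij})$ and $u(G''_{ij})$; this can be verified by fixing a canonical reduction strategy (reducing all off-diagonal entries via (ii) to obtain a diagonal matrix, then applying (iii)), where $u$ on a diagonal matrix is the explicit product of the factors $P_{g_{ii}}/g_{ii}$ at $A^4 = -1$; well-definedness of $\mu$ (Theorem \ref{thm:well-defined}) then ensures the resulting $u(G)$ does not depend on the reduction path. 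Alternatively, this bookkeeping could be bypassed by routing through Thistlethwaite's spanning-tree state sum for $\tau$ restricted at $A^4 = -1$, where loop-containing states vanish, and identifying $\det(G)$ with the resulting signed count via the signed matrix--tree theorem.
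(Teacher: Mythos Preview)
Your proposal is correct and follows essentially the same route as the paper: reduce to a lemma about $\mu[G]$ at $\zeta=e^{i\pi/4}$, use the vanishing of $(-A^{-2}-A^2)$ and the evaluation $P_n(\zeta)=n\zeta^{n+3}$, and invoke the determinant identity $\det G=\det G'_{ij}-g_{ij}\det G''_{ij}$ (which the paper proves by direct expansion rather than the matrix determinant lemma). The one thing you leave undetermined---the unit $u(G)$ needed to make the phase bookkeeping go through---is given explicitly in the paper as $u(G)=\zeta^{\,3m+\sum_{i\le j}g_{ij}}$ for an $m\times m$ matrix; with this formula the compatibility check under relation (ii) is a one-line exponent count, and your detour through a canonical reduction strategy becomes unnecessary.
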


Theorem \ref{thm:dets} generalizes the classical case, where $\det(L) = |J_L(-1)|$. It may be surprising that the polynomial $\nu_{D, \Gamma}$ corresponds to the matrix $G'$ rather than the matrix $G$---an analogous phenomenon, called \emph{chromatic duality}, was observed in \cite{bck21} when comparing the determinant and signature defined there with those of \cite{ill10}.
\begin{lemma}
\label{thm:det}
Let $G = (g_{k\ell})$ be an $m$-by-$m$ symmetric, integer matrix, and fix $\zeta = e^{i\pi/4}$. Then
\begin{equation}
\label{eq:det}
\mu[G] |_{A = \zeta} = \zeta^{(3m + \sum_{i \leq j} g_{ij})} \det G.
\end{equation}
In particular, $\big|\mu[G] |_{A = \zeta} \big| = \big| \det G \big|$.
\end{lemma}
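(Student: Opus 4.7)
I would prove the lemma by induction on the size $m$ of $G$, using the defining recursions of $\mu$ from Definition~\ref{def:new_bracket}. The base case $m=0$ is immediate: $\mu[\varnothing] = 1$ and the right side is $\zeta^0 \cdot 1 = 1$. The inductive step rests on two simplifications that occur specifically at $A = \zeta = e^{i\pi/4}$. First, $\zeta^4 = -1$ gives $-\zeta^{-2} - \zeta^2 = i - i = 0$, so the entire first summand in relation (iii) vanishes and the recursion collapses to $\mu[G]|_\zeta = P_{g_{ii}}(\zeta)\,\mu[G'_i]|_\zeta$. Second, a one-line calculation using $\zeta^4 = -1$ shows that every term of $P_n(A) = \sum_{k=1}^{|n|}(-1)^{k-1}A^{\mathrm{sgn}(n)(|n|-4k+2)}$ collapses to the same value, giving the clean closed form $P_n(\zeta) = -n\,\zeta^{n+2}$ for every integer $n$.

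For the step handling relation (iii) (a diagonal entry $g_{ii}$ whose row and column are otherwise zero), cofactor expansion gives $\det G = g_{ii}\det G'_i$, and the factor of $g_{ii}$ supplied by $P_{g_{ii}}(\zeta)$ combines with the induction hypothesis on $G'_i$ and the identity $\zeta^4 = -1$ to produce exactly $\zeta^{3m+\sum_{i\leq j}g_{ij}}\det G$ after a short phase computation. The nontrivial step is relation (ii). Here I would apply the matrix determinant lemma to the rank-one update
\[
G'_{ij} = G + g_{ij}(e_i - e_j)(e_i - e_j)^{T},
\]
which yields $\det G'_{ij} = \det G + g_{ij}\,(e_i - e_j)^{T}\mathrm{adj}(G)(e_i - e_j)$. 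The crucial combinatorial identity is that this quadratic form equals $\det G''_{ij}$; this follows by recognizing $G''_{ij}$ as the result of adding row/column $j$ to row/column $i$ (which preserves $\det G$) and then deleting row/column $j$, so cofactor expansion produces the same expression as the adjugate pairing. Substituting the induction hypothesis for $G'_{ij}$ and $G''_{ij}$, together with $P_{-g_{ij}}(\zeta) = g_{ij}\,\zeta^{-g_{ij}+2}$, into the recursion gives
\[
\mu[G]|_\zeta = \zeta^{\alpha(G'_{ij})-g_{ij}}\bigl(\det G + g_{ij}\det G''_{ij}\bigr) + g_{ij}\,\zeta^{\alpha(G''_{ij})-g_{ij}+2}\det G''_{ij},
\]
where $\alpha(G) = 3m + \sum_{i\leq j}g_{ij}$.

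The main obstacle, and the genuine content of the proof, is the bookkeeping needed to make the two $\det G''_{ij}$ contributions cancel. One verifies directly that both $\sum_{k\leq\ell}$ of $G'_{ij}$ and of $G''_{ij}$ equal $\sum_{k\leq\ell}g_{k\ell} + g_{ij}$ (for the latter, the change in diagonal sum is $+2g_{ij}$ while the change in off-diagonal sum is $-g_{ij}$), and that the dimensions differ by one. With this accounting, the two coefficients of $\det G''_{ij}$ differ by a factor $\zeta^{-4} = -1$ and therefore cancel, leaving only the $\zeta^{\alpha(G)}\det G$ term. Finally, the absolute value statement is then immediate from $|\zeta^n| = 1$.
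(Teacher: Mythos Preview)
Your strategy is essentially the paper's: evaluate $P_n$ at $\zeta$, collapse the $\mu$-recursions, and verify the determinant identity $\det G'_{ij}=\det G+g_{ij}\det G''_{ij}$. Your derivation of that identity via the rank-one update $G'_{ij}=G+g_{ij}(e_i-e_j)(e_i-e_j)^T$ and the matrix determinant lemma is a clean alternative to the paper's bare-hands cofactor expansion. There are, however, two genuine gaps.

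\textbf{Induction scheme.} Inducting on $m$ alone is circular: in relation~(ii) the matrix $G'_{ij}$ still has size $m$, so you cannot apply the induction hypothesis to it. The paper avoids this by inducting instead on the number of nonzero off-diagonal entries of $G$, which strictly drops when passing to $G'_{ij}$; the diagonal base case is then handled by relation~(iii). You need some such secondary measure.

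\textbf{Phase cancellation.} Your claimed $\zeta^{-4}$ cancellation does not occur. With $\alpha(G'_{ij})=3m+\beta+g_{ij}$ and $\alpha(G''_{ij})=3(m-1)+\beta+g_{ij}$ (both of which you computed correctly, where $\beta=\sum_{k\le\ell}g_{k\ell}$), the two $\det G''_{ij}$ coefficients in your displayed expression are $g_{ij}\zeta^{3m+\beta}$ and $g_{ij}\zeta^{3m+\beta-1}$; they differ by a factor of $\zeta$, not $\zeta^{\pm4}$. The reason the argument refuses to close is that your evaluation $P_n(\zeta)=-n\zeta^{n+2}$ is \emph{correct}, whereas the paper's $P_n(\zeta)=n\zeta^{n+3}$ is not (check $P_1(\zeta)=\zeta^{-1}\ne\zeta^4$), and the exponent $3m$ in the stated formula is calibrated to the paper's value of $P_n(\zeta)$. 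If you carry your correct $P_n(\zeta)$ through honestly, relation~(iii) gives $\mu[\mathrm{diag}(g_1,\dots,g_m)]|_\zeta=\zeta^{-2m+\sum g_i}\prod g_i$, and the inductive step then yields $\mu[G]|_\zeta=\zeta^{-2m+\sum_{i\le j}g_{ij}}\det G$; with \emph{this} exponent the two $\det G''_{ij}$ coefficients differ by exactly $\zeta^4=-1$ and cancel as you wanted. Since $|\zeta|=1$, the absolute-value conclusion---which is all that is used in the proof of Theorem~\ref{thm:dets}---is unaffected either way.
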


Assuming this lemma, we have:

\begin{proof}[Proof of Theorem \protect \ref{thm:dets}]
By Proposition \ref{thm:goeritz_eq}, $G'$ is a Goeritz matrix for $B(\Gamma)$. Using Lemma \ref{thm:det} and Remark \ref{rmk:duality},
$$
|\nu_{D, \Gamma}(-1)| = |\tau[\Gamma](e^{i\pi/4})| = |\tau[B(\Gamma)](e^{i\pi/4})|  = |\mu[G'](e^{i\pi/4})| = |\det(G')|.
$$
The proof for $\Gamma'$ and $G$ is the same.
\end{proof}

We now prove the lemma.

\begin{proof}[Proof of Lemma \protect \ref{thm:det}]
First, we calculate
$$
P_n(\zeta) = n\zeta^{n + 3},
$$
where $P_n$ is the polynomial of Definition \ref{def:polys}. Additionally, since $(-A^{-2} - A^2)|_{A = \zeta} = 0$, relations (ii) and (iii) of Definition \ref{def:new_bracket} reduce to:
\begin{enumerate}[label=(\roman*)]
\addtocounter{enumi}{1}
\item $\mu \begin{bmatrix} g_{ii} & r_i & g_{ij} \\
c_i & * & c_j \\
g_{ij} & r_j & g_{jj} \\
\end{bmatrix} = 
\zeta^{-g_{ij}} \mu \begin{bmatrix}
g_{ii} + g_{ij} & r_i & 0 \\
c_i & * & c_j \\
0 & r_j & g_{jj} + g_{ij}
\end{bmatrix}  -
g_{ij}\zeta^{3-g_{ij}} \mu \begin{bmatrix}
g_{ii} + g_{jj} + 2g_{ij}  & r_i + r_j\\
c_i + c_j & *
\end{bmatrix} $
\item $\mu \begin{bmatrix}
g_{i - 1, i - 1} & 0 & \dots\\
0 & g_{ii} & 0 \\
\dots & 0 & g_{i + 1, i + 1}
\end{bmatrix} =
g_{ii}\zeta^{g_{ii} + 3}
\mu \begin{bmatrix}
g_{i - 1, i - 1} & \dots \\
\dots & g_{i + 1, i + 1}
\end{bmatrix} $
\end{enumerate}
In each relation above we've shown only part of each matrix. The variable $r_i$ in (ii) indicates the portion of row $i$ from $g_{i, i + 1}$ to $g_{i, j - 1}$, and $c_i$ is the portion of column $i$ from $g_{i + 1, i}$ to $g_{j - 1, i}$. The symbol $*$ is the square block bound by $c_i$, $r_i$, $c_j$ and $r_j$.

Using (iii), equation (\ref{eq:det}) is easy to verify if $G$ is diagonal. The proof proceeds by induction on the number of nonzero, off-diagonal elements of $G$. Let $\alpha = \sum_{i \leq j} g_{ij}$, and let $g_{ij}$ (for fixed $i$ and $j$) be a nonzero, off-diagonal element of $G$. With notation as above, (ii) and the induction hypothesis give
\begin{align*}
\mu \begin{bmatrix} g_{ii} & r_i & g_{ij} \\
c_i & * & c_j \\
g_{ij} & r_j & g_{jj} \\
\end{bmatrix} &= 
\zeta^{3m + \alpha} \det \begin{bmatrix}
g_{ii} + g_{ij} & r_i & 0 \\
c_i & * & c_j \\
0 & r_j & g_{jj} + g_{ij}
\end{bmatrix}  -g_{ij}\zeta^{3m + \alpha} \det \begin{bmatrix}
g_{ii} + g_{jj} + 2g_{ij}  & r_i + r_j\\
c_i + c_j & *
\end{bmatrix}.
\end{align*}
The proof is finished by showing
$$
\det \begin{bmatrix} g_{ii} & r_i & g_{ij} \\
c_i & * & c_j \\
g_{ij} & r_j & g_{jj} \\
\end{bmatrix} = \det \begin{bmatrix}
g_{ii} + g_{ij} & r_i & 0 \\
c_i & * & c_j \\
0 & r_j & g_{jj} + g_{ij}
\end{bmatrix} \\
- g_{ij} \det \begin{bmatrix}
g_{ii} + g_{jj} + 2g_{ij}  & r_i + r_j\\
c_i + c_j & *
\end{bmatrix}.
$$
Letting $G$ denote the matrix on the left side of the equation above, we calculate:
\begin{align*}
&\det \begin{bmatrix}
g_{ii} + g_{ij} & r_i & 0 \\
c_i & * & c_j \\
0 & r_j & g_{jj} + g_{ij}
\end{bmatrix} \\
&= 
\det G + g_{ij} \Big(
\det \begin{bmatrix}
1 & r_i & g_{ij} \\
0 & * & c_j \\
-1 & r_j & g_{jj}
\end{bmatrix} +  
\det \begin{bmatrix}
g_{ii} & r_i & -1 \\
c_i & * & 0 \\
g_{ij} & r_j & 1
\end{bmatrix}
+ \det \begin{bmatrix}
1 & r_i & -g_{ij} \\
0 & * & 0 \\
-1 & r_j & g_{ij}
\end{bmatrix} \Big) \\
&= \det G + g_{ij} \Big( 
\det \begin{bmatrix}
* & c_j \\
r_j & g_{jj}
\end{bmatrix} + (-1)^{i + j + 1}
\det \begin{bmatrix}
 r_i & g_{ij} \\
* & c_j 
\end{bmatrix} + 
\det \begin{bmatrix}
g_{ii} & r_i \\
c_i & *
\end{bmatrix} \\
& \ \ \ \ \ +
(-1)^{i + j + 1}
\det \begin{bmatrix}
c_i & * \\
g_{ij} & r_j
\end{bmatrix} + 
\det \begin{bmatrix}
* & 0 \\
 r_j & g_{ij}
\end{bmatrix} + (-1)^{i + j + 1}
\det \begin{bmatrix}
r_i & -g_{ij} \\
* & 0
\end{bmatrix} \Big) \\
&= \det G + g_{ij}
\det \begin{bmatrix}
g_{ii} + g_{jj} + 2g_{ij}  & r_i + r_j\\
c_i + c_j & *
\end{bmatrix}.
\end{align*}
This completes the proof.
\end{proof}

We conclude with two remarks.

\begin{rmk}
Let $D \subset \Sigma$ be a non-split, checkerboard-colorable, alternating link diagram, such that the Tait graphs $\Gamma$ and $\Gamma'$ of $D$ are cellularly embedded. Let $G$ and $G'$ be respective Goeritz matrices, and suppose $\Gamma$ has positive-signed edges. Carrying out the specializations mentioned in Remark \ref{rmk:krushkal}, we have:
\begin{align*}
|\det(G')| &= \lim_{t \to -1} |(-t^{-1} - 1)^gP_{\Gamma, \Sigma}(-t - 1, -t^{-1} - 1, -t^{-1} - 1, (-t^{-1} - 1)^{-1})| \\
|\det(G)| &= \lim_{t \to -1}  |(-t - 1)^gP_{\Gamma, \Sigma}(-t - 1, -t^{-1} - 1, (-t - 1)^{-1}, -t - 1)|,
\end{align*}
where $P$ is the Krushkal polynomial.
\end{rmk}

\begin{rmk}
The results of Section 6 can be placed in the context of \emph{virtual links}, which are equivalence classes of links in thickened surfaces under natural stabilization and destabilization operations. Every virtual link is uniquely represented, up to diffeomorphism, by a link in a minimal genus thickened surface \cite{k03}, and in this way every diffeomorphism invariant of links in thickened surfaces produces an invariant of virtual links. In our case, the two polynomials of $V$ in Theorem \ref{thm:nu} give invariants of checkerboard-colorable virtual links which satisfy the same determinant properties.
\end{rmk}

\bibliography{volume_conjecture}{}

\bibliographystyle{amsplain}
\end{document}